\documentclass[11pt,reqno]{amsart}

\usepackage{amsfonts, amsthm, amsmath}
\allowdisplaybreaks[4]

\usepackage{rotating}

\usepackage{tikz}

\usepackage{graphics}

\usepackage{amssymb}

\usepackage{amscd}

\usepackage[latin2]{inputenc}

\usepackage{t1enc}

\usepackage[mathscr]{eucal}

\usepackage{ferrers}

\usepackage{indentfirst}

\usepackage{graphicx}

\usepackage{graphics}

\usepackage{pict2e}

\usepackage{mathrsfs}

\usepackage{enumerate}
\usepackage[pagebackref]{hyperref}
\hypersetup{colorlinks=true}
\usepackage{cite}
\usepackage{color}
\usepackage{epic}
\usepackage{hyperref} 
\usepackage{framed}
\usepackage{mathabx}

\numberwithin{equation}{section}
\topmargin 0.8in
\textheight=8.2in
\textwidth=6.4in
\voffset=-.68in
\hoffset=-.68in

\def\red{\textcolor{red}}
\theoremstyle{plain}

\newtheorem{theorem}{Theorem}[section]

\newtheorem{lemma}[theorem]{Lemma}

\theoremstyle{definition}

\newtheorem{Def}[theorem]{Definition}

\newtheorem{example}[theorem]{Example}

\newtheorem{remark}[theorem]{Remark}

\newtheorem{?}[theorem]{Problem}

\newcommand{\PP}{\mathcal{P}}
\newcommand{\F}{\mathcal{F}}
\newcommand{\D}{\mathcal{D}}
\newcommand{\OO}{\mathcal{O}}
\newcommand{\sbf}{\mathbf{s}}
\newcommand{\lbf}{\mathbf{l}}
\newcommand{\vbf}{\mathbf{v}}

\newcommand{\LL}{\mathcal{L}}
\newcommand{\redd}{\textcolor{red}}
\def\asc{\mathrm{asc}}
\def\fb{\mathrm{fb}}
\def\lb{\mathrm{lb}}
\newcommand\qbin[2]{{\left[\begin{matrix} #1 \\ #2 \end{matrix} \right]}}

\begin{document}

\title[$m$-falling lecture hall theorem]{A lecture hall theorem for $m$-falling partitions}

\author[S. Fu]{Shishuo Fu}
\address[Shishuo Fu]{College of Mathematics and Statistics, Chongqing University, Huxi campus LD506, Chongqing 401331, P.R. China}
\email{fsshuo@cqu.edu.cn}

\author[D. Tang]{Dazhao Tang}

\address[Dazhao Tang]{College of Mathematics and Statistics, Chongqing University, Huxi campus LD206, Chongqing 401331, P.R. China}
\email{dazhaotang@sina.com}

\author[A. J. Yee]{Ae Ja Yee}
\address[Ae Ja Yee]{Department of Mathematics, The Pennsylvania State University, University Park, PA 16802, USA}
\email{yee@psu.edu}

\date{\today}

\dedicatory{To our mentor and friend, George E. Andrews, on his 80th birthday.}

\begin{abstract}
For an integer $m\ge 2$, a partition $\lambda=(\lambda_1,\lambda_2,\ldots)$ is called $m$-falling, a notion introduced by Keith, if the least nonnegative residues mod $m$ of $\lambda_i$'s form a nonincreasing sequence. We extend a bijection originally due to the third author to deduce a lecture hall theorem for such $m$-falling partitions. A special case of this result gives rise to a finite version of Pak-Postnikov's $(m,c)$-generalization of Euler's theorem. Our work is partially motivated by a recent extension of Euler's theorem for all moduli, due to Keith and Xiong. We note that their result actually can be refined with one more parameter.
\end{abstract}

\subjclass[2010]{05A17, 11P83}

\keywords{Partitions; Stockhofe-Keith map; lecture hall partitions; $m$-falling partitions.}

\maketitle


\section{Introduction}\label{sec1: intro}

A \emph{partition} $\lambda$ of a positive integer $n$ is a nonincreasing sequence of positive integers $(\lambda_1,\lambda_2,\ldots,\lambda_r)$ such that $\sum_{i=1}^{r}\lambda_i=n$. The $\lambda_i$'s are called the \emph{parts} of $\lambda$, and $n$ is called the \emph{weight} of $\lambda$, usually denoted as $|\lambda|$. For convenience, we often allow parts of size zero and append as many zeros as needed. 

Being widely perceived as the genesis of the theory of partition, Euler's theorem asserts that the set of partitions of $n$ into odd parts and the set of partitions of $n$ into distinct parts are equinumerous. Equivalently,
\begin{equation*}
\prod_{i=1}^{\infty} (1+q^i)=\prod_{i=1}^{\infty} \frac{1}{1-q^{2i-1}}.
\end{equation*}

Among numerous generalizations and refinements of Euler's theorem \cite{And1, And2, BME1, BME2, BME3, CGJL, CYZ, Fin, Hir, Moo, RV, PP, Pak, Sav, Sta, Syl, Yee1}, the one that arguably attracted the most attention is the following finite version named the \emph{Lecture Hall Theorem}, first discovered by Bousquet-M\'elou and Eriksson.

If $\lambda=(\lambda_1, \ldots, \lambda_n)$ is a partition of length $n$ with some parts possibly zero such that
\begin{equation}
 \frac{\lambda_1}{n}\ge\frac{\lambda_2}{n-1}\ge\cdots\ge\frac{\lambda_n}{1}\ge 0, \label{LHC}
\end{equation}
then $\lambda$ is called a \emph{lecture hall partition of length $n$}. Let $\LL_n$ be the set of lecture hall partitions of length $n$.

\begin{theorem}[Theorem 1.1, \cite{BME1}]
For $n\ge 1$,
\begin{align}\label{id:lecture-hall}
\sum_{\lambda\in\LL_n}q^{|\lambda|}=\prod_{i=1}^{n}\frac{1}{1-q^{2i-1}}.
\end{align}
\end{theorem}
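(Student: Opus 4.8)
The plan is to prove \eqref{id:lecture-hall} by induction on $n$, exploiting the fact that the right-hand side is the generating function for the set $\OO_n$ of partitions whose parts all lie in $\{1,3,5,\dots,2n-1\}$. The natural first move is to peel off the largest part: if $\lambda=(\lambda_1,\dots,\lambda_n)\in\LL_n$, then the truncation $\mu=(\lambda_2,\dots,\lambda_n)$ automatically satisfies $\frac{\lambda_2}{n-1}\ge\cdots\ge\frac{\lambda_n}{1}\ge 0$, so $\mu\in\LL_{n-1}$. The only surviving constraint is the coupling inequality $\frac{\lambda_1}{n}\ge\frac{\lambda_2}{n-1}$, i.e. $\lambda_1\ge\lceil n\mu_1/(n-1)\rceil$. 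Summing a geometric series over the admissible $\lambda_1$ gives
\begin{equation*}
L_n(q):=\sum_{\lambda\in\LL_n}q^{|\lambda|}=\frac{1}{1-q}\sum_{\mu\in\LL_{n-1}}q^{|\mu|+\lceil n\mu_1/(n-1)\rceil},
\end{equation*}
with base case $L_1(q)=1/(1-q)=1/(1-q^{2\cdot 1-1})$.

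The difficulty is immediately visible: the exponent $\lceil n\mu_1/(n-1)\rceil$ depends on the largest part $\mu_1$ of the truncated partition, so the sum does \emph{not} factor as $\frac{1}{1-q^{2n-1}}L_{n-1}(q)$, which is what the induction hypothesis would need. To repair this I would strengthen the induction by tracking the largest part, introducing the refined series
\begin{equation*}
L_n(q;x):=\sum_{\lambda\in\LL_n}q^{|\lambda|}x^{\lambda_1},
\end{equation*}
for which the same peeling argument yields $L_n(q;x)=\frac{1}{1-qx}\sum_{\mu\in\LL_{n-1}}q^{|\mu|}(qx)^{\lceil n\mu_1/(n-1)\rceil}$. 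Writing $\lceil n\mu_1/(n-1)\rceil=\mu_1+\lceil \mu_1/(n-1)\rceil$ converts this into a functional recurrence relating $L_n(q;x)$ to $L_{n-1}$ evaluated at arguments twisted by the residual ceiling $\lceil \mu_1/(n-1)\rceil$. The plan is then to guess the correct two-variable refinement of the product formula and verify, by a direct computation, that it satisfies this recurrence, thereby closing the induction and recovering \eqref{id:lecture-hall} at $x=1$.

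The main obstacle, and the place where all the genuine content sits, is controlling the interaction between the ceiling function and the integrality of the parts: the residual term $\lceil \mu_1/(n-1)\rceil$ couples to $\mu_1$ only through its residue modulo $n-1$, so a clean recurrence requires splitting the sum over $\LL_{n-1}$ according to $\mu_1\bmod(n-1)$ and matching these residue classes to the way copies of the new odd part $2n-1$ can be introduced. Equivalently, in lattice-point language, one must understand exactly which nonnegative integer tuples of ``defects'' $(n-i)\lambda_i-(n-i+1)\lambda_{i+1}$ are realized by genuine integer partitions; this is the subtlety that prevents the lecture hall cone from decomposing into an obvious product and is precisely what makes the theorem nontrivial.

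As an alternative that fits the bijective spirit of the present paper, I would instead seek a weight-preserving bijection $\LL_n\to\OO_n$ built recursively: the pair consisting of the truncation $\mu\in\LL_{n-1}$ and the excess $\lambda_1-\lceil n\mu_1/(n-1)\rceil\ge 0$ should be transferred, via the inductively constructed map on $\LL_{n-1}$ together with the insertion of an appropriate number of parts equal to $2n-1$, into a partition with parts in $\{1,3,\dots,2n-1\}$. Designing the insertion rule so that it is reversible is exactly where the modular bookkeeping above reappears; since the paper develops the Stockhofe--Keith map for this circle of problems, I expect such a bijective argument to be the more illuminating route, with the generating-function induction serving as a useful check.
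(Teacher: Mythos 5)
Your proposal sets up the peeling recurrence correctly and correctly diagnoses why it fails to factor: the exponent $\lceil n\mu_1/(n-1)\rceil$ couples the new largest part to the truncation. But that is exactly where the proposal stops, and both of your routes end at the point where the theorem's content begins, so this is an outline rather than a proof. For the generating-function route, tracking only $x^{\mu_1}$ cannot close the induction as stated: the residual twist $\lceil \mu_1/(n-1)\rceil$ depends on $\mu_1 \bmod (n-1)$, and since this modulus changes at every inductive step, there is no recurrence expressing $L_n(q;x)$ in terms of $L_{n-1}(q;\cdot)$ evaluated at finitely many arguments in a uniform way; you would have to split $\LL_{n-1}$ into $n-1$ residue classes whose number grows with $n$, and the ``two-variable refinement of the product formula'' you propose to guess and verify is never exhibited --- that refinement is precisely what Bousquet-M\'elou and Eriksson had to invent, and it is not a simple product in $q$ and $x$. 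The bijective alternative likewise names the needed insertion rule without constructing it, and reversibility (the whole difficulty) is deferred to ``modular bookkeeping'' that never appears.

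For comparison: the paper itself does not prove this statement (it cites \cite{BME1}), but its Theorem~\ref{thm: mc-lecture hall}, specialized to $m=2$, $c=1$, contains it, and the proof given there is a bijection $\varphi_n$ based on Yee's insertion procedure from \cite{Yee}. The crucial design difference from your sketch is that $\varphi_n$ is \emph{not} recursive on $n$: it fixes $n$ and inserts the parts of $\lambda\in\OO_{1,2}^n$ one at a time, in nonincreasing order, into a growing partition of $\LL_{1,2}^n$, with a test performed at successive block positions that decides how much of the inserted part is spread as $+1$'s across existing parts and how much is deposited as new parts. Each insertion uses up one whole odd part and raises the alternating sum by exactly $1$, so the identity follows without ever confronting the ceiling-function coupling that blocks your induction on $n$. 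If you want to complete your attempt, replacing recursion on $n$ by recursion on the number of parts inserted --- i.e., adopting this insertion design --- is the missing idea.
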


It can be easily checked that any partition $\lambda$ into distinct parts less than or equal to $n$ satisfies the inequality condition in \eqref{LHC}. That is,  $\lambda\in\LL_n$ for any $n\ge \lambda_1$, which shows that \eqref{id:lecture-hall} indeed yields Euler's theorem when $n\rightarrow \infty$.

In 1883, Glaisher \cite{Gla} found a purely bijective proof of Euler's theorem and was able to extend it to the equinumerous relationship between partitions with parts repeated less than $m$ times and partitions into non-multiples of $m$ for any $m\ge 2$. 
That is,
\begin{equation*}
\prod_{n=1}^{\infty}\left(1+q^{n}+\cdots +q^{(m-1)n}\right)=\prod_{i=1\atop i\not\equiv 0 \pmod{m}}^{\infty} \frac{1}{(1-q^{i})}.
\end{equation*}

Recently, Xiong and Keith \cite{KX} obtained a substantial refinement of Glaisher's result with respect to certain partition statistics, which we define next.

Throughout this paper, we will assume that $m\ge 2$.  For any partition $\lambda=(\lambda_1,\lambda_2,\ldots)$, let
$$s_i(\lambda)=\lambda_i-\lambda_{i+1}+\lambda_{m+i}-\lambda_{m+i+1}+\lambda_{2m+i}-\lambda_{2m+i+1}+\cdots, \quad 1\le i\le m.$$
We define its \emph{$m$-alternating sum type} to be the $(m-1)$-tuple $\sbf(\lambda):=(s_1(\lambda),\ldots,s_{m-1}(\lambda))$ and its \emph{$m$-alternating sum} $s(\lambda):=\sum_{i=1}^{m-1} s_i(\lambda)$.
We  note that the $m$-alternating sum type of $\lambda$ does not put any restriction on $s_m$. 

Similarly, let
$$
\ell_i(\lambda)=\#\{j:\lambda_j\equiv i \pmod m\}, \quad 1\le i\le m.
$$
We define its \emph{$m$-length type} to be the $(m-1)$-tuple $\lbf(\lambda):=(\ell_1(\lambda),\ell_2(\lambda),\ldots,\ell_{m-1}(\lambda))$ and its \emph{$m$-length} $\ell(\lambda)=\sum_{i=1}^{m-1}\ell_i(\lambda)$.
Note that the $m$-length type of $\lambda$ is independent of the parts in $\lambda$ that are multiples of $m$. 

Let us define the following two subsets of partitions:
\begin{itemize}
		\item $\D_m$: the set of partitions in which each non-zero part can be repeated at most $m-1$ times;
		\item $\OO_m$: the set of partitions in which each non-zero part is not divisible by $m$, called \emph{$m$-regular} partitions.
\end{itemize}

\begin{theorem}[Theorem 2.1, \cite{KX}]\label{KXthm}
For $m\ge 2$,
\begin{equation*}
\sum_{\mu \in \D_m} z_1^{s_1(\mu)} \cdots z_{m-1}^{s_{m-1}(\mu)} q^{|\mu |} =\sum_{\lambda \in \OO_m} z_1^{\ell_1(\lambda)} \cdots z_{m-1}^{\ell_{m-1}(\lambda)}q^{|\lambda|}.
\end{equation*}
\end{theorem}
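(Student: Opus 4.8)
The plan is to prove the identity as a weight-preserving refinement of Glaisher's correspondence, first disposing of the right-hand side and then transporting the left-hand statistic through conjugation. Since a partition in $\OO_m$ is merely an arbitrary multiset of positive non-multiples of $m$, and a part of size $km+i$ with $i\in\{1,\ldots,m-1\}$ contributes the factor $z_iq^{km+i}$ per occurrence to $\prod_j z_j^{\ell_j(\lambda)}q^{|\lambda|}$, the right-hand side factors immediately as
\begin{equation*}
\sum_{\lambda\in\OO_m}z_1^{\ell_1(\lambda)}\cdots z_{m-1}^{\ell_{m-1}(\lambda)}q^{|\lambda|}=\prod_{i=1}^{m-1}\prod_{k\ge0}\frac{1}{1-z_iq^{km+i}}.
\end{equation*}
It therefore suffices to show that the left-hand side equals this same product.

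Next I would reinterpret $s_i$ through conjugation. Writing $\mu'$ for the conjugate of $\mu$, the difference $\mu_j-\mu_{j+1}$ counts the parts of $\mu'$ equal to $j$; summing the telescoping expression defining $s_i(\mu)$ over the positions $j\equiv i\pmod m$ thus gives $s_i(\mu)=\#\{\text{parts of }\mu'\text{ congruent to }i\bmod m\}=\ell_i(\mu')$. Moreover conjugation sends $\D_m$ bijectively onto $\mathcal{C}_m:=\{\nu:\nu_j-\nu_{j+1}\le m-1\text{ for all }j\ge1\}$, the partitions whose successive differences, and whose least part, are smaller than $m$ (with the convention $\nu_{\ell+1}=0$), because the value $j$ is repeated at most $m-1$ times in $\mu$ exactly when $\mu'_j-\mu'_{j+1}\le m-1$. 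Hence the left-hand side becomes
\begin{equation*}
\sum_{\mu\in\D_m}z_1^{s_1(\mu)}\cdots z_{m-1}^{s_{m-1}(\mu)}q^{|\mu|}=\sum_{\nu\in\mathcal{C}_m}z_1^{\ell_1(\nu)}\cdots z_{m-1}^{\ell_{m-1}(\nu)}q^{|\nu|},
\end{equation*}
and the theorem reduces to the assertion that $\mathcal{C}_m$ and $\OO_m$ carry the same joint distribution of weight and of the residue-class part-counts $(\ell_1,\ldots,\ell_{m-1})$.

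The core step is then to construct an explicit weight-preserving bijection $\Psi\colon\mathcal{C}_m\to\OO_m$ with $\ell_i(\Psi(\nu))=\ell_i(\nu)$ for every $i$; composing with conjugation yields the desired bijection $\D_m\to\OO_m$ sending each $s_i$ to $\ell_i$. This is a residue-refinement of the classical equinumerosity $|\mathcal{C}_m(n)|=|\D_m(n)|=|\OO_m(n)|$ (conjugation followed by Glaisher), and I expect the construction of $\Psi$ to be the main obstacle. The naive Glaisher map does not conserve the finer vector $(\ell_1,\ldots,\ell_{m-1})$, since in splitting and merging parts it freely moves mass into and out of the residue class $0\bmod m$: for instance, with $m=3$ Glaisher fixes $\mu=(2)$, whereas $\sbf(\mu)=(2,0)$ forces any statistic-preserving image to satisfy $(\ell_1,\ell_2)=(2,0)$, namely $(1,1)$. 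To keep all $m-1$ counts fixed simultaneously one should instead process the parts one residue class at a time, absorbing the multiples of $m$ into neighbouring parts by an insertion procedure in the spirit of the Stockhofe map and of the third author's bijection. The delicate point is to verify that a single such map is well defined, invertible, and conserves every $\ell_i$ at once; once $\Psi$ is in hand, the product evaluation of the left-hand side, and with it the theorem, follows.
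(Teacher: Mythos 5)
Your preparatory reductions are correct, and they coincide with the paper's own first step: the product evaluation of the $\OO_m$ side is immediate; and your two observations that $s_i(\mu)=\ell_i(\mu')$ and that conjugation carries $\D_m$ bijectively onto the $m$-flat partitions $\F_m$ (your $\mathcal{C}_m$) are exactly the content of the paper's conjugation lemma and Remark~\ref{rem0}. Your example showing that Glaisher's bijection cannot preserve the vector $(\ell_1,\ldots,\ell_{m-1})$ is also well taken.

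The genuine gap is that you stop precisely where the content of the theorem begins. After the conjugation step, the assertion that $\F_m$ and $\OO_m$ are equidistributed with respect to the joint statistic $\left(|\cdot|,\ell_1,\ldots,\ell_{m-1}\right)$ is not a lemma on the way to the theorem --- it \emph{is} the theorem, rephrased; so declaring the construction of $\Psi$ to be ``the main obstacle,'' to be handled by some unspecified insertion procedure ``in the spirit of the Stockhofe map,'' means that no proof has actually been given. The paper closes this hole by recalling the Stockhofe--Keith map explicitly: for $\lambda\in\OO_m$, repeatedly subtract $m$ from the parts $\lambda_1,\ldots,\lambda_i$ whenever $\lambda_i-\lambda_{i+1}\ge m$, arriving at the base $m$-flat partition $\beta(\lambda)$ and the decomposition $\lambda=m\sigma+\beta(\lambda)$; then re-insert the parts of $m\sigma'$, from largest to smallest, by the rule that a part $km$ with $km-\tau_1\ge m$ is traded for adding $m$ to each of $\tau_1,\ldots,\tau_i$ and creating a new part $(k-i)m$, where $i$ is the unique index keeping the result $m$-flat, and otherwise $km$ is simply adjoined as a new part. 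Two features make this work, and both require verification (or a citation of Theorem~\ref{KXtheorem}, i.e., Theorem 3.1 of \cite{KX}, which is how the paper proceeds): every operation changes parts by multiples of $m$ only, so each $\ell_i$, $1\le i\le m-1$, is conserved; and the insertion index is unique while parts are processed in decreasing order, which is what makes the procedure reversible. Your proposal names neither the map nor these verifications, so as it stands it establishes only a restatement of the theorem, not the theorem itself.
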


The natural desire to find certain ``lecture hall version'' for the result of Xiong and Keith motivated us to take on this investigation. While the version with full generality matching their result is yet to be found, we do obtain a lecture hall theorem for $m$-falling partitions.

A partition $\lambda=(\lambda_1,\lambda_2,\ldots)$ is called $m$-falling, which was introduced by Keith in \cite{Kei-the}, if the least nonnegative residues mod $m$ of $\lambda_i$'s form a nonincreasing sequence.  We denote the set of $m$-falling and $m$-regular partitions ($m$-falling regular partitions for short) as $\OO_{m\searrow}$.
For $n\ge 1$, let
\begin{align*}
\OO_{m\searrow}^n&:=\{\lambda\in\OO_{m\searrow}:\lambda_1<nm\}
\end{align*}
and $\LL_{m\searrow}^n$ be a subset of $\D_m$ with certain ratio conditions between parts. Due to the complexity of the conditions, the definition of $\LL_{m\searrow}^n$ is postponed to section~\ref{sec3: m-falling}.  A partition in $\LL_{m\searrow}^n$ is called an $m$-falling lecture hall partition.

We now state the main result of this paper.

\begin{theorem}[$m$-falling lecture hall theorem]\label{thm:mfallingLH}
For $m\ge 2$ and $n\ge 1$,
\begin{align}
\sum_{\mu \in \LL_{m\searrow}^n}z_1^{s_1(\mu)}\cdots z_{m-1}^{s_{m-1}(\mu)}q^{|\mu|}=\sum_{\lambda \in \OO_{m\searrow}^n} z_1^{\ell_1(\lambda)}\cdots z_{m-1}^{\ell_{m-1}(\lambda)}q^{|\lambda|}.
\end{align}
\end{theorem}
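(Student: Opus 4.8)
The plan is to prove Theorem~\ref{thm:mfallingLH} bijectively. Both sides of the identity in Theorem~\ref{KXthm} are realized by a single weight-preserving map $\Phi\colon\D_m\to\OO_m$ (the Stockhofe--Keith map, suitably extended) satisfying $s_i(\mu)=\ell_i(\Phi(\mu))$ for every $1\le i\le m-1$ together with $|\mu|=|\Phi(\mu)|$. Consequently the statistics on the two sides of Theorem~\ref{thm:mfallingLH} are matched term by term as soon as we know that $\Phi$ carries $\LL_{m\searrow}^n$ bijectively onto $\OO_{m\searrow}^n$. Thus the whole theorem reduces to the purely set-theoretic assertion $\Phi(\LL_{m\searrow}^n)=\OO_{m\searrow}^n$; and since $\LL_{m\searrow}^n$ is to be defined in Section~\ref{sec3: m-falling} by a family of lecture-hall ratio inequalities, the real content is that these inequalities cut out exactly the preimage $\Phi^{-1}(\OO_{m\searrow}^n)$.

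First I would record how $\Phi$ and $\Phi^{-1}$ act on the two natural pieces of data of a partition. Writing each part as $\lambda_i=m\lfloor\lambda_i/m\rfloor+r_i$ with $r_i\in\{0,\dots,m-1\}$, one piece is the residue sequence $(r_1,r_2,\dots)$ and the other is the quotient sequence $(\lfloor\lambda_i/m\rfloor)$. On the image side the residue data govern both $m$-regularity ($r_i\ne 0$ for nonzero parts) and the $m$-falling property ($r_1\ge r_2\ge\cdots$), whereas the size of the largest part, hence the bound $\lambda_1<nm$, is controlled by the largest quotient $\lfloor\lambda_1/m\rfloor\le n-1$. Isolating these two pieces lets me treat the two defining conditions of $\OO_{m\searrow}^n$ separately.

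Next I would establish the two containments. For the forward direction, assuming $\mu\in\LL_{m\searrow}^n$, the image $\Phi(\mu)$ is automatically $m$-regular; I would then deduce that it is $m$-falling from a monotonicity property of $\Phi$ on residues, and show that $\lambda_1<nm$ is forced by the top ratio inequality in the definition of $\LL_{m\searrow}^n$. For the reverse direction I would start from $\lambda\in\OO_{m\searrow}^n$, apply $\Phi^{-1}$, and check that the $m$-falling property together with $\lambda_1<nm$ forces each of the ratio inequalities defining $\LL_{m\searrow}^n$ in turn. Because $\Phi$ is already a bijection $\D_m\to\OO_m$, these two containments upgrade to a restricted bijection $\LL_{m\searrow}^n\to\OO_{m\searrow}^n$, which proves the identity.

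The hard part will be the bookkeeping in this last step: converting the single bound $\lambda_1<nm$ on the image, read through the recursive structure of $\Phi$ one residue block at a time, into the full nested family of ratio inequalities defining $\LL_{m\searrow}^n$, and showing that these inequalities are simultaneously necessary and sufficient. This is exactly where the finiteness parameter $n$ enters, and it explains why the definition of $\LL_{m\searrow}^n$ is postponed and intricate; it mirrors the way the classical lecture hall inequalities of length $n$ emerge in the Bousquet-M\'elou--Eriksson bijection. I expect the chief difficulty to be maintaining the correct inequality at each residue level as the quotient data propagate through $\Phi$.
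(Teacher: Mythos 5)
Your reduction rests on a false premise. You claim Theorem~\ref{thm:mfallingLH} follows once one shows that the Stockhofe--Keith bijection $\Phi\colon\D_m\to\OO_m$ (the map behind Theorem~\ref{KXthm}) satisfies $\Phi(\LL_{m\searrow}^n)=\OO_{m\searrow}^n$, and your whole plan consists of proving that set equality. But the equality is simply false, already for $m=2$, $n=2$. Take $\lambda=(3,3,3)\in\OO_{2\searrow}^2$ (odd parts, largest part $3<nm=4$). Its unique $\Phi$-preimage in $\D_2$ is computed as follows: the base $2$-flat partition of $(3,3,3)$ is $(1,1,1)$, so $(3,3,3)=2\cdot(1,1,1)+(1,1,1)$; inserting the single column $6=2\cdot3$ of $2\sigma'$ into $(1,1,1)$ yields the $2$-flat partition $(3,3,2,1)$, whose conjugate is $\mu=(4,3,2)$. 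Since the Stockhofe--Keith map is a bijection $\OO_2\to\D_2$, this $\mu$ is the \emph{only} candidate, and it is not in $\LL_{2\searrow}^2$: Definition~\ref{m-falling LH} with $m=2$, $n=2$ requires $l(\mu)\le\left\lfloor 3/2\right\rfloor\cdot 0+2=2$ nonzero parts and $\mu_1/2\ge\mu_2/1$, whereas $(4,3,2)$ has three nonzero parts and $4/2<3/1$. (Another instance: $(3,3,1,1,1)\in\OO_{2\searrow}^2$ has preimage $(6,2,1)\notin\LL_{2\searrow}^2$.) So both containments you intend to prove fail, and no bookkeeping in your last step can repair this: the map that proves the infinite statement (Theorem~\ref{KXthm}) does not respect the finite structures at all. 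This mirrors the classical situation: Glaisher's and Sylvester's bijections prove Euler's theorem but do not prove the Lecture Hall Theorem \eqref{id:lecture-hall}; the finite refinement needs a genuinely different bijection.

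The paper's actual route is different in exactly this respect. It first proves a single-residue lecture hall theorem (Theorem~\ref{thm: mc-lecture hall}) by an insertion bijection $\varphi_n\colon\OO_{c,m}^n\to\LL_{c,m}^n$ modeled on Yee's combinatorial proof of the lecture hall theorem: the parts $km+c$ of $\lambda$ are inserted one at a time in nonincreasing order, each insertion governed by ratio tests of the form $\frac{\mu_{mj+c}+1}{n-2j}\ge\frac{\mu_{mj+m}+1}{n-2j-1}$ and increasing the $m$-alternating sum by exactly $1$. Theorem~\ref{thm:mfallingLH} is then deduced by a residue-transfer argument: for a fixed type $\vbf$, the embeddings $f_{\vbf}\colon\OO_{m\searrow}^{\vbf}\hookrightarrow\OO_{c,m}^n$ and $g_{\vbf}\colon\LL_{m\searrow}^{\vbf}\hookrightarrow\LL_{c,m}^n$ relabel all non-$m$ cells of the $m$-modular graph by a single residue $c$; the $m$-falling condition (respectively condition (1) of Definition~\ref{m-falling LH}) together with $\vbf$ makes each embedding reversible, and the desired bijection is $g_{\vbf}^{-1}\circ\varphi_n\circ f_{\vbf}$. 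To salvage your write-up you would have to abandon the Stockhofe--Keith map for the finite statement and construct a bijection of this lecture-hall type instead.
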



Another result of this paper is a refinement of Theorem~\ref{KXthm}. Let us consider the \emph{residue sequence} of a partition. Namely, for $\lambda=(\lambda_1,\lambda_2,\ldots)$, we take for each part the least non-negative residue modulo $m$ and denote the resulting sequence as $v(\lambda)=v_1v_2\cdots$. Recall the permutation statistic \emph{ascent}:
\begin{align*}
\asc(w)=\#\{i:1\le i <n,\;w_i<w_{i+1}\},
\end{align*}
for any word $w=w_1\cdots w_n$, which is consisted of totally ordered letters. We extend this statistic to partitions via their residue sequences and let $\asc(\lambda)=\asc(v(\lambda))$. 

We have the following refinement of  Theorem \ref{KXthm}.
\begin{theorem}\label{thm: one para refine}
For $m\ge 2$,
\begin{align}
\sum_{\mu \in\D_m}z_1^{s_1(\mu)}\cdots z_{m-1}^{s_{m-1}(\mu)}z_m^{s_m(\mu)}q^{|\mu|}=\sum_{\lambda\in\OO_m}z_1^{\ell_1(\lambda)}\cdots z_{m-1}^{\ell_{m-1}(\lambda)}z_m^{\left\lfloor\frac{\lambda_1}{m}\right\rfloor-\asc(\lambda)}q^{|\lambda|}.
\end{align}
\end{theorem}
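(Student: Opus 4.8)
The plan is to upgrade Theorem~\ref{KXthm} by carrying one additional statistic through the weight-preserving bijection $\Phi\colon\D_m\to\OO_m$ that realizes it, i.e.\ the map satisfying $|\mu|=|\Phi(\mu)|$ and $s_i(\mu)=\ell_i(\Phi(\mu))$ for $1\le i\le m-1$. Writing $\lambda=\Phi(\mu)$, every $z_1,\dots,z_{m-1}$ exponent and the $q$ exponent already agree on the two sides, so the entire refinement comes down to the single extra matching
\begin{equation*}
s_m(\mu)=\left\lfloor\frac{\lambda_1}{m}\right\rfloor-\asc(\lambda).
\end{equation*}

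The first step is a telescoping identity valid for every partition $\mu$. Since $s_i(\mu)=\sum_{k\ge0}\bigl(\mu_{km+i}-\mu_{km+i+1}\bigr)$, summing over $1\le i\le m$ lets the positive terms run over all $\mu_j$ with $j\ge1$ and the negative terms over all $\mu_j$ with $j\ge2$, so that
\begin{equation*}
\sum_{i=1}^{m}s_i(\mu)=\mu_1.
\end{equation*}
Writing $\ell(\lambda)=\sum_{i=1}^{m-1}\ell_i(\lambda)$ for the $m$-length and invoking $s_i(\mu)=\ell_i(\lambda)$ for $i<m$, the target matching becomes equivalent to the \emph{largest-part identity}
\begin{equation*}
\mu_1=\left\lfloor\frac{\lambda_1}{m}\right\rfloor-\asc(\lambda)+\ell(\lambda).
\end{equation*}
Thus the refinement reduces entirely to expressing the top part of $\mu=\Phi^{-1}(\lambda)$ through data of $\lambda$.

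This largest-part identity is the main obstacle: it is invisible at the level of the $s_i$ alone and genuinely requires opening up $\Phi$. I would prove it by induction on the $m$-length $\ell(\lambda)$, peeling off the smallest part of $\lambda$. For $\ell\ge2$, deleting $\lambda_\ell$ leaves $\lfloor\lambda_1/m\rfloor$ unchanged, drops $\ell(\lambda)$ by one, and drops $\asc(\lambda)$ by $1$ exactly when $v_{\ell-1}<v_\ell$, so the right-hand side decreases by $1$ when the residue sequence weakly descends at the last step and by $0$ when it ascends there. The heart of the argument is to show that $\mu_1$ reacts identically, that is, that under $\Phi^{-1}$ removing the smallest part of $\lambda$ either shrinks the top part of $\mu$ by one (weak-descent case) or leaves it fixed (ascent case). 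Establishing this demands a concrete description of $\Phi$ on the $m$-modular diagram of $\lambda$, reading each nonzero part as $\lambda_j=m\lfloor\lambda_j/m\rfloor+v_j$ with $v_j\in\{1,\dots,m-1\}$ and tracking how the remainders $v_j$ stack; controlling that stacking is precisely what turns an ascent into an absorbed part rather than a new top row of $\mu$.

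As built-in sanity checks I would verify the specializations. Setting $z_m=1$ must return Theorem~\ref{KXthm} verbatim, while collapsing $z_1=\cdots=z_m=z$ turns the left side into $\sum_{\mu\in\D_m}z^{\mu_1}q^{|\mu|}$ by the telescoping identity and the right side into $\sum_{\lambda\in\OO_m}z^{\lfloor\lambda_1/m\rfloor-\asc(\lambda)+\ell(\lambda)}q^{|\lambda|}$, so the largest-part identity is exactly the statement that these agree termwise under $\Phi$. A further consistency point is nonnegativity: $s_m(\mu)=\sum_{k\ge1}(\mu_{km}-\mu_{km+1})\ge0$, which matches $\lfloor\lambda_1/m\rfloor-\asc(\lambda)=\sum_{j\ge1}\bigl[\bigl(\lfloor\lambda_j/m\rfloor-\lfloor\lambda_{j+1}/m\rfloor\bigr)-[v_j<v_{j+1}]\bigr]\ge0$, since an ascent $v_j<v_{j+1}$ together with $\lambda_j\ge\lambda_{j+1}$ forces a strict drop in the integer quotients. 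The bookkeeping in the induction---certifying that an ascent at the peeled step is absorbed rather than extending the top of $\mu$---is the step I expect to require the most care.
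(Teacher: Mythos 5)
Your reduction is sound, and it coincides with the paper's strategy up to a point: both arguments push the extra statistic through the Stockhofe--Keith bijection, and your telescoping step correctly shows that the whole refinement hinges on the single identity $s_m(\mu)=\lfloor\lambda_1/m\rfloor-\asc(\lambda)$, equivalently the largest-part identity $\mu_1=\ell(\lambda)+\lfloor\lambda_1/m\rfloor-\asc(\lambda)$ (which, as the paper remarks, is Keith's Theorem~6). The problem is that your proof stops exactly where the theorem lives. The inductive step --- that deleting the smallest part of $\lambda$ shrinks $\mu_1$ by one in the weak-descent case and fixes it in the ascent case --- is never established; you name it as ``the heart of the argument'' and defer it to an unspecified analysis of how the remainders ``stack.'' That is a genuine gap, not bookkeeping, and it is worse than it looks: peeling the smallest part does not interact locally with the algorithm defining the bijection. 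In the ascent case $v_{\ell-1}<v_\ell$, removing $\lambda_\ell$ changes the base $m$-flat partition $\beta(\lambda)$ in every row (each of the first $\ell-1$ parts of the subtracted partition $\sigma$ grows by $1$, so each part of the base drops by $m$ and a whole new column of $m\sigma'$ appears), and the entire sequence of Step~2 insertions must then be re-run against a different base. So the induction, as proposed, restates the difficulty rather than resolving it.

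The paper avoids induction entirely by reading the identity off the three-step structure of the map $\phi:\OO_m\to\D_m$. First, since $\beta(\lambda)$ has the same residue sequence as $\lambda$, has consecutive differences in $\{0,\dots,m-1\}$, and has last part less than $m$, the quotient $\lfloor b_i/m\rfloor$ increases by exactly $1$ across each ascent of the residue sequence and is $0$ at the bottom; telescoping gives $\lfloor b_1/m\rfloor=\asc(\lambda)$ for the largest part $b_1$ of $\beta(\lambda)$. Second, every insertion in Step~2 creates exactly one new part of $\tau$ divisible by $m$, and these are the only such parts (parts descending from $\beta(\lambda)$ keep their nonzero residues when shifted by $m$); since conjugation exchanges $s_m$ and $\ell_m$, this gives $s_m(\mu)=\ell_m(\tau)=(\lambda_1-b_1)/m$. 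Combining the two facts yields $s_m(\mu)=\lfloor\lambda_1/m\rfloor-\asc(\lambda)$ in one line. Any rescue of your inductive route would end up proving statements of exactly this type about $\beta(\lambda)$ and the insertion procedure anyway, so the direct computation is both shorter and essentially unavoidable.
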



To make this paper self-contained, in the next section we first recall the Stockhofe-Keith map and then prove Theorem~\ref{thm: one para refine}. In section~\ref{sec3: m-falling}, we define $m$-falling lecture hall partitions and prove Theorem~\ref{thm:mfallingLH}, one special case of which gives rise to a lecture hall theorem (see Theorem~\ref{thm: mc-lecture hall}) for Pak-Postnikov's $(m,c)$-generalization \cite{PP} of Euler's theorem. We conclude in the final section with some outlook for future work.

\section{Preliminaries and a proof of Theorem~\ref{thm: one para refine}}\label{sec2: one para refine}

In this section, we first recall further definitions and notions involving partitions for later use. After that, we will recap the Stockhofe-Keith map and prove Theorem~\ref{thm: one para refine}.

Given two (infinite) sequences $\lambda=(\lambda_1,\lambda_2,\ldots)$ and $\mu=(\mu_1,\mu_2,\ldots)$,  we define the usual linear combination $k\lambda+l\mu$ as
$$
k\lambda+l\mu=(k\lambda_1+l\mu_1,k\lambda_2+l\mu_2,\ldots)
$$
for any two nonnegative integers $k$ and $l$.

For a partition $\lambda=(\lambda_1,\ldots,\lambda_r)$, its \emph{conjugate partition} $\lambda'=(\lambda_1',\ldots,\lambda_s')$ is a partition resulting from choosing $\lambda_i'$ as the number of parts of $\lambda$ that are not less than $i$  \cite[Definition~1.8]{Andr1976}.

The following lemma (see for instance \cite[Lemma 1]{KX}) follows  via the conjugation of partitions.

\begin{lemma}
The conjugation map $\lambda\mapsto \lambda'$ is a weight-preserving bijection such that
\begin{enumerate}[1)]
	\item $\sbf(\lambda)=\lbf(\lambda')$,
	\item $\lambda_1-s(\lambda)=\ell_m(\lambda')$.
\end{enumerate}
\end{lemma}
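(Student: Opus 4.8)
The plan is to exploit the fact that conjugation is an involution, so that the map $\lambda\mapsto\lambda'$ is automatically a bijection, and it is weight-preserving because the Young diagrams of $\lambda$ and $\lambda'$ contain the same number of cells. Thus the entire content of the lemma lies in the two statistic identities, and for both I would reduce everything to a single bookkeeping device: for any partition $\mu$, the multiplicity of the value $v\ge 1$ among the parts of $\mu$ equals $\mu_v'-\mu_{v+1}'$. Applying this with $\mu=\lambda'$ and using $(\lambda')'=\lambda$, the number of parts of $\lambda'$ equal to $v$ is exactly $\lambda_v-\lambda_{v+1}$, where I adopt the convention of appending trailing zeros so that $\lambda_j$ makes sense for all $j$.

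For part 1), fix $i$ with $1\le i\le m-1$. Since $\ell_i(\lambda')$ counts the parts of $\lambda'$ whose value is $\equiv i\pmod m$, I would group those parts by their actual value and apply the multiplicity formula. Because $1\le i\le m-1$, the positive integers congruent to $i$ modulo $m$ are precisely $i,\,m+i,\,2m+i,\ldots$, so
\begin{align*}
\ell_i(\lambda')=\sum_{k\ge 0}\bigl(\lambda_{km+i}-\lambda_{km+i+1}\bigr)=\lambda_i-\lambda_{i+1}+\lambda_{m+i}-\lambda_{m+i+1}+\cdots,
\end{align*}
which is exactly $s_i(\lambda)$ by definition. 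Ranging over $1\le i\le m-1$ yields $\sbf(\lambda)=\lbf(\lambda')$.

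For part 2), I would first observe that the total number of (positive) parts of $\lambda'$ equals $\lambda_1$, the largest part of $\lambda$. Every such part is counted by exactly one of $\ell_1(\lambda'),\ldots,\ell_{m-1}(\lambda'),\ell_m(\lambda')$, according to its residue modulo $m$ (with residue $0$ recorded by $\ell_m$). Hence $\lambda_1=\ell(\lambda')+\ell_m(\lambda')$, and combining this with part 1), namely $\ell(\lambda')=\sum_{i=1}^{m-1}\ell_i(\lambda')=\sum_{i=1}^{m-1}s_i(\lambda)=s(\lambda)$, gives $\ell_m(\lambda')=\lambda_1-s(\lambda)$. Alternatively, one can compute $\ell_m(\lambda')=\sum_{k\ge 1}(\lambda_{km}-\lambda_{km+1})=s_m(\lambda)$ directly from the multiplicity formula and then verify the telescoping identity $\sum_{i=1}^{m}s_i(\lambda)=\lambda_1$.

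The computations themselves are routine; the only point demanding care—and the main obstacle to a clean write-up—is the bookkeeping around trailing zeros and the indexing convention that $\ell_m$ records the residue class $0\pmod m$. I would make explicit that the multiplicities are summed only over positive values $v$, so that the infinitely many appended zero parts never enter the counts, and I would be careful that the smallest positive representative of the class $i$ is $i$ itself when $1\le i\le m-1$ but is $m$ (not $0$) for the class recorded by $\ell_m$; getting these starting indices right is precisely what matches the sums to the given definitions of $s_i(\lambda)$.
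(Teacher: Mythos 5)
Your proposal is correct and follows essentially the same route as the paper: the paper treats part 1) as immediate from conjugation (you simply fill in the details via the multiplicity formula $\lambda_v-\lambda_{v+1}$), and for part 2) your primary count $\lambda_1=\ell(\lambda')+\ell_m(\lambda')$ is just the conjugate-side view of the paper's identity $\lambda_1=s(\lambda)+s_m(\lambda)$, while your stated alternative ($\ell_m(\lambda')=s_m(\lambda)$ plus telescoping) is literally the paper's argument.
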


\proof
1) This immediately follows via the conjugation of partitions, so we omit the details.

2) Again, by conjugation, we see that $s_m(\lambda)=\ell_m(\lambda')$. Also, by the definition,
\begin{equation*}
\lambda_1=s_1(\lambda)+\cdots +s_{m}(\lambda)=s(\lambda)+s_m(\lambda).
\end{equation*}
Thus $\lambda_1-s(\lambda)=\ell_m(\lambda')$.
\endproof

Using conjugation, we can derive an interesting set of partitions that are equinumerous to $\D_m$, namely $m$-flat partitions:
\begin{itemize}
	\item $\F_m$: the set of partitions in which the differences between consecutive parts are at most $m-1$, called \emph{$m$-flat} partitions.
\end{itemize}

\begin{remark} \label{rem0}
The two sets $\D_m$ and $\F_m$ are clearly in one-to-one correspondence via conjugation.
\end{remark}

\subsection{Stockhofe-Keith map \texorpdfstring{$\phi: \OO_m\to \D_m$}{}}

Given any partition $\lambda$, we define its \emph{base $m$-flat partition}, denoted as $\beta(\lambda)$, as follows. Whenever there are two consecutive parts $\lambda_i$ and $\lambda_{i+1}$ with $\lambda_{i}-\lambda_{i+1}\ge m$, we subtract $m$ from each of the parts $\lambda_1,\lambda_2,\ldots, \lambda_i$. We repeat this until we reach a partition in $\F_m$, which is taken to be $\beta(\lambda)$.

Suppose we are given a partition $\lambda\in\OO_m$. We now describe step-by-step how to get a partition $\phi(\lambda)=\mu\in\D_m$ via the aforementioned Stockhofe-Keith map $\phi$.
\begin{enumerate}
	\item[Step 1:]  Decompose $\lambda=m\sigma+\beta(\lambda)$.
	\item[Step 2:] Insert each part in $m\sigma'$, from the largest one to the smallest one, into $\beta(\lambda)$ according to the following insertion method. Note that after each insertion, we always arrive at a new $m$-flat partition. In particular, the final partition we get, say $\tau$, is in $\F_m$ as well.
	\item[Step 3:] Conjugate $\tau$ to get $\mu=\tau'\in\D_m$.
\end{enumerate}

\begin{framed}
\begin{center}
 {\bf Insertion method to get $\tau\in\F_m$}
\end{center}
\noindent Initiate $\tau=(\tau_1,\tau_2,\ldots)=\beta(\lambda)$. Note that parts in $m\sigma'$ are necessarily multiples of $m$. Suppose we currently want to insert a part $km$ into $\tau$.

If $km-\tau_1\ge m$, then find the unique integer $i$, $1\le i\le k$, such that $$(\tau_1+m,\tau_2+m,\ldots,\tau_i+m, (k-i)m, \tau_{i+1},\ldots)$$ is still a partition in $\F_m$. Replace $\tau$ with this new partition.

Otherwise, we simply insert $km$ into $\beta(\lambda)$ as a new part and replace $\tau$ with this new partition.
\end{framed}

For example, let us take $m=3$ and $\lambda=(19,17,14,13,13,8,1)\in\OO_3$. We use $3$-modular Ferrers graphs \cite{Andr1976} to illustrate the process of deriving $\mu$. See Figure~\ref{S-K map} below. For the readers' convenience, we have colored the inserted cells red for step 2.

We should remark that the original description of the Stockhofe-Keith map \cite{Kei-the, Sto} consists of only Steps 1 and 2 above. Thus the  map \cite{Kei-the, Sto} accounts for the following theorem.

\begin{theorem}[Theorem 3.1, \cite{KX}]  \label{KXtheorem}
$m$-regular partitions of any given $m$-length type are in bijection with $m$-flat partitions of the same $m$-length type.
\end{theorem}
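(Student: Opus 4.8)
The statement is exactly the assertion that the first two steps of the Stockhofe--Keith construction, i.e. the map $\phi_0\colon \OO_m\to\F_m$ sending $\lambda$ to the partition $\tau$ obtained after Step~2, restricts to a bijection between the $m$-regular partitions with a prescribed $m$-length type and the $m$-flat partitions with the same $m$-length type. The plan is therefore to (i) check that $\phi_0$ is well defined, (ii) verify that it preserves the $m$-length type (indeed the whole residue multiset of the nonzero parts), and (iii) show it is a bijection by exhibiting an explicit inverse that peels off the inserted parts one at a time.

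For well-definedness, first I would confirm that Step~1 really produces a decomposition $\lambda=m\sigma+\beta(\lambda)$ with $\sigma$ a genuine partition: each base-reduction subtracts $m$ from a prefix $\lambda_1,\dots,\lambda_i$, so the number of times a given coordinate is decremented is nonincreasing in its index, whence $\sigma$ is nonincreasing. Then I would check, via the case analysis built into the insertion rule, that inserting a multiple $km$ into an $m$-flat partition again yields an $m$-flat partition: in the main case one must verify that the two new adjacent differences created around the inserted part $(k-i)m$, namely $(\tau_i+m)-(k-i)m$ and $(k-i)m-\tau_{i+1}$, are both nonnegative and at most $m-1$, which is precisely what the defining inequalities for the unique index $i$ guarantee.

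The $m$-length type is the easy part. Every operation used in Steps~1 and~2 is residue-preserving: base reduction subtracts multiples of $m$, and the insertion rule adds $m$ to a prefix (leaving residues unchanged) while only ever introducing new parts that are multiples of $m$. Hence for $1\le i\le m-1$ one has $\ell_i(\tau)=\ell_i(\beta(\lambda))=\ell_i(\lambda)$, so $\lbf(\tau)=\lbf(\lambda)$; the weight is manifestly preserved as well.

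The crux is injectivity and surjectivity, which I would obtain by constructing the inverse and arguing by induction on the number of inserted parts (equivalently on $|\sigma|$), with base case $\sigma=\varnothing$, where $\lambda=\beta(\lambda)\in\OO_m\cap\F_m$ and $\phi_0$ acts as the identity. The key local claim is that a single insertion $\iota_{km}\colon\F_m\to\F_m$ can be detected and undone: from its output one should be able to read off, using the sizes of the parts and the positions of the residue-$0$ parts, both the value $km$ inserted and the prefix to which $m$ was added, thereby recovering the pre-insertion $m$-flat partition uniquely. Granting this, the inverse of $\phi_0$ strips the inserted multiples of $m$ off a given $\tau\in\F_m$ from smallest to largest (undoing the last insertion first) until no further part can be removed; the survivor is forced to be a base partition $\beta$, the removed parts assemble into $m\sigma'$, and conjugating recovers $\sigma$, after which $\lambda:=m\sigma+\beta$ is the unique preimage. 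I expect the main obstacle to be exactly this reversibility step: one must pin down an intrinsic characterization, purely in terms of $\tau$, of which parts are ``inserted'' and of the order in which the insertions occurred, and then verify that undoing them never leaves the class $\F_m$ and terminates at a $\beta$ whose reconstruction $m\sigma+\beta$ lies in $\OO_m$, i.e. has no nonzero part divisible by $m$. Checking that the peeling order is unambiguous, and that the reconstructed $\lambda$ is genuinely $m$-regular, is where the delicate bookkeeping of the Stockhofe--Keith map concentrates.
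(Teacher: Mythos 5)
Your proposal is correct and takes essentially the same route as the paper: the theorem is attributed there precisely to the original Stockhofe--Keith map, i.e.\ your $\phi_0$ consisting of Step~1 (the decomposition $\lambda=m\sigma+\beta(\lambda)$) and Step~2 (the insertion procedure), whose residue-preserving operations give equality of $m$-length types. The paper itself does not carry out the well-definedness and reversibility bookkeeping either, deferring those details to \cite{Kei-the, Sto, KX}, so your deferral of the peeling argument matches the level of detail of the paper's own treatment.
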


Theorem~\ref{KXtheorem} with Remark~\ref{rem0} proves Theorem~\ref{KXthm}, so we will adopt the modified definition of the Stockhofe-Keith map  in this paper referring it to the map consisting of all the three steps above.

\begin{figure}[htp]
\begin{ferrers}

\addtext{-1.5}{-1.5}{Step 1: $\lambda=$}
\addcellrows{7+6+5+5+5+3+1}
\highlightcellbyletter{1}{1}{3}
\highlightcellbyletter{1}{2}{3}
\highlightcellbyletter{1}{3}{3}
\highlightcellbyletter{1}{4}{3}
\highlightcellbyletter{1}{5}{3}
\highlightcellbyletter{1}{6}{3}
\highlightcellbyletter{1}{7}{1}
\highlightcellbyletter{2}{1}{3}
\highlightcellbyletter{2}{2}{3}
\highlightcellbyletter{2}{3}{3}
\highlightcellbyletter{2}{4}{3}
\highlightcellbyletter{2}{5}{3}
\highlightcellbyletter{2}{6}{2}
\highlightcellbyletter{3}{1}{3}
\highlightcellbyletter{3}{2}{3}
\highlightcellbyletter{3}{3}{3}
\highlightcellbyletter{3}{4}{3}
\highlightcellbyletter{3}{5}{2}
\highlightcellbyletter{4}{1}{3}
\highlightcellbyletter{4}{2}{3}
\highlightcellbyletter{4}{3}{3}
\highlightcellbyletter{4}{4}{3}
\highlightcellbyletter{4}{5}{1}
\highlightcellbyletter{5}{1}{3}
\highlightcellbyletter{5}{2}{3}
\highlightcellbyletter{5}{3}{3}
\highlightcellbyletter{5}{4}{3}
\highlightcellbyletter{5}{5}{1}
\highlightcellbyletter{6}{1}{3}
\highlightcellbyletter{6}{2}{3}
\highlightcellbyletter{6}{3}{2}
\highlightcellbyletter{7}{1}{1}
\addtext{4}{-1.5}{$=$}
\putright
\addcellrow[0]{4}
\addcellrow[0]{4}
\addcellrow[1]{3}
\addcellrow[1]{3}
\addcellrow[1]{3}
\addcellrow[2]{2}
\highlightcellbyletter{1}{1}{3}
\highlightcellbyletter{1}{2}{3}
\highlightcellbyletter{1}{3}{3}
\highlightcellbyletter{1}{4}{3}
\highlightcellbyletter{2}{1}{3}
\highlightcellbyletter{2}{2}{3}
\highlightcellbyletter{2}{3}{3}
\highlightcellbyletter{2}{4}{3}
\highlightcellbyletter{3}{2}{3}
\highlightcellbyletter{3}{3}{3}
\highlightcellbyletter{3}{4}{3}
\highlightcellbyletter{4}{2}{3}
\highlightcellbyletter{4}{3}{3}
\highlightcellbyletter{4}{4}{3}
\highlightcellbyletter{5}{2}{3}
\highlightcellbyletter{5}{3}{3}
\highlightcellbyletter{5}{4}{3}
\highlightcellbyletter{6}{3}{3}
\highlightcellbyletter{6}{4}{3}
\addtext{3}{-1.5}{$+$}
\putright
\addcellrows{3+2+2+2+2+1+1}
\highlightcellbyletter{1}{1}{3}
\highlightcellbyletter{1}{2}{3}
\highlightcellbyletter{1}{3}{1}
\highlightcellbyletter{2}{1}{3}
\highlightcellbyletter{2}{2}{2}
\highlightcellbyletter{3}{1}{3}
\highlightcellbyletter{3}{2}{2}
\highlightcellbyletter{4}{1}{3}
\highlightcellbyletter{4}{2}{1}
\highlightcellbyletter{5}{1}{3}
\highlightcellbyletter{5}{2}{1}
\highlightcellbyletter{6}{1}{2}
\highlightcellbyletter{7}{1}{1}

\putbelow
\addtext{-1.5}{-1.5}{Step 2:}
\addcellrows{1+1+1+1+1+1}
\highlightcellbyletter{1}{1}{3}
\highlightcellbyletter{2}{1}{3}
\highlightcellbyletter{3}{1}{3}
\highlightcellbyletter{4}{1}{3}
\highlightcellbyletter{5}{1}{3}
\highlightcellbyletter{6}{1}{3}
\addtext{1.2}{-1.5}{$\xrightarrow{insert}$}
\putright
\addcellrows{3+2+2+2+2+1+1}
\highlightcellbyletter{1}{1}{3}
\highlightcellbyletter{1}{2}{3}
\highlightcellbyletter{1}{3}{1}
\highlightcellbyletter{2}{1}{3}
\highlightcellbyletter{2}{2}{2}
\highlightcellbyletter{3}{1}{3}
\highlightcellbyletter{3}{2}{2}
\highlightcellbyletter{4}{1}{3}
\highlightcellbyletter{4}{2}{1}
\highlightcellbyletter{5}{1}{3}
\highlightcellbyletter{5}{2}{1}
\highlightcellbyletter{6}{1}{2}
\highlightcellbyletter{7}{1}{1}
\addtext{2.2}{-1.5}{$=$}
\putright
\addcellrows{4+3+3+3+2+2+1+1}
\highlightcellbyletter{1}{1}{\redd{3}}
\highlightcellbyletter{1}{2}{3}
\highlightcellbyletter{1}{3}{3}
\highlightcellbyletter{1}{4}{1}
\highlightcellbyletter{2}{1}{\redd{3}}
\highlightcellbyletter{2}{2}{3}
\highlightcellbyletter{2}{3}{2}
\highlightcellbyletter{3}{1}{\redd{3}}
\highlightcellbyletter{3}{2}{3}
\highlightcellbyletter{3}{3}{2}
\highlightcellbyletter{4}{1}{\redd{3}}
\highlightcellbyletter{4}{2}{3}
\highlightcellbyletter{4}{3}{1}
\highlightcellbyletter{5}{1}{\redd{3}}
\highlightcellbyletter{5}{2}{\redd{3}}
\highlightcellbyletter{6}{1}{3}
\highlightcellbyletter{6}{2}{1}
\highlightcellbyletter{7}{1}{2}
\highlightcellbyletter{8}{1}{1}
\putbelow
\addcellrows{1+1+1+1+1+1}
\highlightcellbyletter{1}{1}{3}
\highlightcellbyletter{2}{1}{3}
\highlightcellbyletter{3}{1}{3}
\highlightcellbyletter{4}{1}{3}
\highlightcellbyletter{5}{1}{3}
\highlightcellbyletter{6}{1}{3}
\addtext{1.2}{-1.5}{$\xrightarrow{insert}$}
\putright
\addcellrows{4+3+3+3+2+2+1+1}
\highlightcellbyletter{1}{1}{3}
\highlightcellbyletter{1}{2}{3}
\highlightcellbyletter{1}{3}{3}
\highlightcellbyletter{1}{4}{1}
\highlightcellbyletter{2}{1}{3}
\highlightcellbyletter{2}{2}{3}
\highlightcellbyletter{2}{3}{2}
\highlightcellbyletter{3}{1}{3}
\highlightcellbyletter{3}{2}{3}
\highlightcellbyletter{3}{3}{2}
\highlightcellbyletter{4}{1}{3}
\highlightcellbyletter{4}{2}{3}
\highlightcellbyletter{4}{3}{1}
\highlightcellbyletter{5}{1}{3}
\highlightcellbyletter{5}{2}{3}
\highlightcellbyletter{6}{1}{3}
\highlightcellbyletter{6}{2}{1}
\highlightcellbyletter{7}{1}{2}
\highlightcellbyletter{8}{1}{1}
\addtext{2.8}{-1.5}{$=$}
\putright
\addcellrows{5+4+4+3+3+2+2+1+1}
\addtext{3}{-1.5}{$\cdots$}
\highlightcellbyletter{1}{1}{\redd{3}}
\highlightcellbyletter{1}{2}{3}
\highlightcellbyletter{1}{3}{3}
\highlightcellbyletter{1}{4}{3}
\highlightcellbyletter{1}{5}{1}
\highlightcellbyletter{2}{1}{\redd{3}}
\highlightcellbyletter{2}{2}{3}
\highlightcellbyletter{2}{3}{3}
\highlightcellbyletter{2}{4}{2}
\highlightcellbyletter{3}{1}{\redd{3}}
\highlightcellbyletter{3}{2}{3}
\highlightcellbyletter{3}{3}{3}
\highlightcellbyletter{3}{4}{2}
\highlightcellbyletter{4}{1}{\redd{3}}
\highlightcellbyletter{4}{2}{\redd{3}}
\highlightcellbyletter{4}{3}{\redd{3}}
\highlightcellbyletter{5}{1}{3}
\highlightcellbyletter{5}{2}{3}
\highlightcellbyletter{5}{3}{1}
\highlightcellbyletter{6}{1}{3}
\highlightcellbyletter{6}{2}{3}
\highlightcellbyletter{7}{1}{3}
\highlightcellbyletter{7}{2}{1}
\highlightcellbyletter{8}{1}{2}
\highlightcellbyletter{9}{1}{1}

\putbelow
\addtext{-1.5}{-1.5}{Step 3:}
\addcellrows{5+5+4+4+3+3+2+2+2+1+1}
\highlightcellbyletter{1}{1}{3}
\highlightcellbyletter{1}{2}{3}
\highlightcellbyletter{1}{3}{3}
\highlightcellbyletter{1}{4}{3}
\highlightcellbyletter{1}{5}{3}
\highlightcellbyletter{2}{1}{3}
\highlightcellbyletter{2}{2}{3}
\highlightcellbyletter{2}{3}{3}
\highlightcellbyletter{2}{4}{3}
\highlightcellbyletter{2}{5}{1}
\highlightcellbyletter{3}{1}{3}
\highlightcellbyletter{3}{2}{3}
\highlightcellbyletter{3}{3}{3}
\highlightcellbyletter{3}{4}{2}
\highlightcellbyletter{4}{1}{3}
\highlightcellbyletter{4}{2}{3}
\highlightcellbyletter{4}{3}{3}
\highlightcellbyletter{4}{4}{2}
\highlightcellbyletter{5}{1}{3}
\highlightcellbyletter{5}{2}{3}
\highlightcellbyletter{5}{3}{3}
\highlightcellbyletter{6}{1}{3}
\highlightcellbyletter{6}{2}{3}
\highlightcellbyletter{6}{3}{1}
\highlightcellbyletter{7}{1}{3}
\highlightcellbyletter{7}{2}{3}
\highlightcellbyletter{8}{1}{3}
\highlightcellbyletter{8}{2}{3}
\highlightcellbyletter{9}{1}{3}
\highlightcellbyletter{9}{2}{1}
\highlightcellbyletter{10}{1}{2}
\highlightcellbyletter{11}{1}{1}
\addtext{3.5}{-2}{$\xrightarrow{conjugate}$}
\addtext{8.3}{-2.2}{$\mu=(11,10,9,9,8,8,6,5,5,4,4,2,2,1,1)$}
\end{ferrers}
\caption{Three steps to get $\phi(\lambda)=\mu$}
\label{S-K map}
\end{figure}

\subsection{Proof of Theorem~\ref{thm: one para refine}}

In view of the proof of Theorem~\ref{KXthm} in \cite{KX}, all it remains is to examine the map $\phi$ with respect to the extra parameter $z_m$. Suppose $\lambda\in\OO_m$, and the largest part of $\beta(\lambda)$ is $b_1$, then we have $\left\lfloor\dfrac{b_1}{m}\right\rfloor=\asc(\lambda)$, according to the definition of $m$-flat partitions. Next, during step 2, we insert columns of $m\sigma$ into $\tau$, and each insertion will give rise to a new part in $\tau$ that is divisible by $m$, therefore we see $\dfrac{\lambda_1-b_1}{m}=s_m(\mu)$. The above discussion gives us
\begin{equation*}
s_m(\mu)=\dfrac{\lambda_1-b_1}{m}=\left\lfloor\dfrac{\lambda_1}{m}\right\rfloor-\left\lfloor\dfrac{b_1}{m}\right\rfloor=\left\lfloor\dfrac{\lambda_1}{m}\right\rfloor-\asc(\lambda),
\end{equation*}
as desired. \hfill \qed

\begin{remark}
For $\lambda\in\OO_m$, let $\phi(\lambda)=\mu$. Then, the extra parameter tracked by $z_m$ gives us
\begin{align*}
\mu_1=s(\mu)+s_m(\mu)=s(\mu)+\left\lfloor\frac{\lambda_1}{m}\right\rfloor-\asc(\lambda)=\ell(\lambda)+\left\lfloor\frac{\lambda_1}{m}\right\rfloor-\asc(\lambda),
\end{align*}
which has previously been derived by Keith \cite[Theorem~6]{Kei-the} as well (he used $f_{\lambda}$ instead of our $\asc(\lambda)$). Moreover, this refinement is reminiscent of Sylvester's bijection for proving Euler's theorem, in which case $m=2$ and we always have $\asc(\lambda)=0$, see for example Theorem 1 (item 4) in \cite{Zen}.
\end{remark}

\section{A lecture hall theorem for \texorpdfstring{$m$-falling}{} partitions}\label{sec3: m-falling}


We will first handle the case with a single residue class. Let us fix $c, 1\le c \le m-1$. For $n\ge 1$, let
\begin{align*}
\OO_{c,m}&:=\{\lambda\in\OO_m:\lambda_i\equiv c\pmod{m}, \text{ for all $i$}\},\\
\OO_{c,m}^n&:=\{\lambda\in\OO_{c,m}:\lambda_1<nm\},\\
\D_{c,m}&:=\left\{\lambda\in\D_m:\sbf(\lambda)=(\overbrace{0,\ldots,0}^{c-1},a,0\ldots,0) \text{ for some }a> 0, \text{ or }|\lambda|=0 \right\},\\
\LL_{c,m}^n&:=\left\{\lambda\in\D_{c,m}: l(\lambda)\le \left\lfloor\dfrac{n+1}{2}\right\rfloor (m-2) +n \text{ and} \right.\\
&\phantom{:=}\left.
\qquad \qquad \qquad \frac{\lambda_{km+c}}{n-2k}\ge\frac{\lambda_{km+m}}{n-2k-1} \ge\frac{\lambda_{(k+1)m+c}}{n-2k-2} \text{ for } 0\le k<\left\lceil\frac{n}{2}\right\rceil\right\},
\end{align*}
where $l(\lambda)$ is the number of nonzero parts in $\lambda$, and we make the convention that for a fraction $\dfrac{p}{q}$, $q=0$ forces $p=0$. 


\begin{theorem}\label{thm: mc-lecture hall}
For any $n\ge 1$,
\begin{align}\label{id: mc-lechall}
\sum_{\lambda\in\OO_{c,m}^{n}}z^{\ell(\lambda)}q^{|\lambda|}=\sum_{\mu\in\LL_{c,m}^{n}}z^{s(\mu)}q^{|\mu|}=\frac{1}{(1-zq^c)(1-zq^{m+c})\cdots(1-zq^{(n-1)m+c})}.
\end{align}
\end{theorem}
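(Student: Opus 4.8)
The plan is to dispose of the rightmost equality at once and then spend all the effort on the middle one. By definition $\OO_{c,m}^n$ consists precisely of the partitions all of whose parts are congruent to $c$ modulo $m$ and strictly less than $nm$; such parts are forced to lie in the $n$-element set $\{c,\,m+c,\,\ldots,\,(n-1)m+c\}$. Since $1\le c\le m-1$, every part is nonzero modulo $m$, so $\ell(\lambda)$ simply counts all of the parts. Hence, expanding each factor as a geometric series,
\[
\sum_{\lambda\in\OO_{c,m}^n}z^{\ell(\lambda)}q^{|\lambda|}=\prod_{k=0}^{n-1}\frac{1}{1-zq^{km+c}},
\]
which is the claimed product.

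It remains to prove the middle equality, equivalently to produce a bijection $\OO_{c,m}^n\to\LL_{c,m}^n$ preserving the weight and sending $\ell(\lambda)$ to $s(\mu)$. I would organize this as an induction on $n$ driven by the recurrence
\[
F_n(z,q)=\frac{1}{1-zq^c}\,F_{n-1}(zq^m,q),\qquad F_n(z,q):=\sum_{\mu\in\LL_{c,m}^n}z^{s(\mu)}q^{|\mu|},
\]
which (with $F_0=1$) is exactly the recurrence satisfied by the product. On the $\OO_{c,m}^n$ side this recurrence is transparent: one peels off the copies of the smallest part $c$ (the factor $(1-zq^c)^{-1}$) and shifts the remaining parts down by $m$ (the substitution $z\mapsto zq^m$ applied to $\OO_{c,m}^{n-1}$). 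The task is to transport this decomposition across to $\LL_{c,m}^n$. Note that the instance $m=2$, $c=1$ is precisely the classical Lecture Hall Theorem, in which the chained inequalities collapse to $\frac{\mu_1}{n}\ge\cdots\ge\frac{\mu_n}{1}\ge0$; so the concrete plan is to extend the third author's bijective proof of that theorem to the present $(m,c)$ setting.

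The steps, in order, would be: first, unwind the blockwise ratio conditions defining $\LL_{c,m}^n$ into the single monotone chain $\frac{\mu_c}{n}\ge\frac{\mu_m}{n-1}\ge\frac{\mu_{m+c}}{n-2}\ge\cdots$ running through the positions that are $\equiv 0$ or $\equiv c\pmod m$, and record how the conditions $s_i(\mu)=0$ for $i\ne c$ (forced by $\mu\in\D_{c,m}$) together with the repetition cap $m-1$ of $\D_m$ pin down the remaining ``free'' parts. Second, realize the factor $(1-zq^c)^{-1}$ by a removal operation that strips one unit from $s(\mu)$ per step and is reversible. Third, apply the $(m,c)$-analogue of the lecture-hall reduction to the residual partition and check that, after the accompanying staircase subtraction, it lands in $\LL_{c,m}^{n-1}$, with the weight bookkeeping of that subtraction matching $z\mapsto zq^m$ and the length bound $l(\mu)\le\lfloor\frac{n+1}{2}\rfloor(m-2)+n$ descending to the bound for $n-1$. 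Throughout, conjugation (via the lemma, so that $s(\mu)=\ell_c(\mu')$ and $\mu'$ is $m$-flat with parts $\equiv 0,c\pmod m$) is likely to streamline the verification.

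The hard part will be this last verification: showing that the reduction respects the \emph{entire} chain of ratio inequalities and the length bound under the shift $n\to n-1$, and that it is genuinely a bijection and not merely weight-compatible. I would stress that this cannot be reduced to a naive ``free monoid'' decomposition of $\LL_{c,m}^n$ under partwise addition: already for ordinary lecture hall partitions of length $3$ the set is not freely generated by the partitions of weights $1,3,5$ — for example $(3,2,0)$ is a lecture hall partition yet is not a nonnegative integer combination of $(1,0,0)$ and $(2,1,0)$ — so the product formula genuinely requires the recursive bijection, and the careful handling of the free parts in concert with the $\D_m$ repetition constraint is where the real difficulty lies.
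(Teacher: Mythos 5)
Your disposal of the second equality is correct and coincides with the paper's: every part of $\lambda\in\OO_{c,m}^n$ lies in $\{c,m+c,\ldots,(n-1)m+c\}$ and $\ell(\lambda)=l(\lambda)$, so the product formula is immediate. The gap is in the middle equality, which is the entire content of the theorem. Your plan is to induct on $n$ through the recurrence $F_n(z,q)=\frac{1}{1-zq^c}F_{n-1}(zq^m,q)$, which the product certainly satisfies and which is transparent on the $\OO_{c,m}^n$ side; but on the $\LL_{c,m}^n$ side you never define either of the two maps that would realize it: the ``removal operation'' that is supposed to strip weight $c$ and one unit of $s(\mu)$ reversibly, and the ``$(m,c)$-analogue of the lecture-hall reduction'' that is supposed to send the residual partition into a shifted copy of $\LL_{c,m}^{n-1}$ with the bookkeeping $s(\mu)=j+s(\nu)$, $|\mu|=jc+|\nu|+m\,s(\nu)$. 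These are not routine verifications that can be deferred; they are the theorem. You acknowledge this yourself (``the hard part will be this last verification''), but as written there is no candidate map about which to prove anything, so the proposal is a strategy statement rather than a proof. Your observation that the lecture hall monoid is not freely generated, so no naive generator decomposition works, is correct and well taken, but it only underlines that the missing construction is where all the difficulty sits.

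For contrast, the paper does not induct on $n$ at all. It keeps $n$ fixed and builds a bijection $\varphi_n:\OO_{c,m}^n\to\LL_{c,m}^n$ by inserting the parts of $\lambda$, largest first, into the empty sequence: to insert a part $km+c$ one runs (Test I) to determine an index $j$ with $0\le j\le k$, then adds to the current sequence the vector consisting of $jm$ ones, followed by $c$ copies of $k-j+1$, followed by $m-c$ copies of $k-j$; each such insertion raises the weight by exactly $km+c$ and the $m$-alternating sum by exactly $1$, and an explicit deletion procedure (Test D) inverts it part by part, with the proofs of well-definedness carried over from the $m=2$, $c=1$ case in the third author's earlier work. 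This is precisely the extension of Yee's lecture-hall insertion to the $(m,c)$ setting that you allude to but do not carry out. If you wish to salvage your inductive (Bousquet-M\'elou--Eriksson style) route, you must exhibit the removal and reduction maps explicitly and prove that they preserve the chain of ratio inequalities, the length bound $l(\mu)\le\left\lfloor\frac{n+1}{2}\right\rfloor(m-2)+n$ under $n\to n-1$, and bijectivity; none of that is present in the proposal.
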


The above result can be viewed as a finite (or ``lecture hall'') version of the following Pak-Postnikov's $(m,c)$-generalization \cite[Theorem~1]{PP} of Euler's partition theorem since $\lim_{n\to \infty} \OO_{c,m}^n\to \OO_{c,m}$ and $\lim_{n\to \infty} \LL_{c,m}^n \to \D_{c,m}$.

\begin{theorem}
For $n\ge 1, m\ge 2$ and $1\le c\le m-1$, the number of partitions of $n$ into parts congruent to $c$ modulo $m$, equals the number of partitions of $n$ with exactly $c$ parts of maximal size, $m-c$ (if any) second by size parts, $c$ (if any) third by size parts, etc.
\end{theorem}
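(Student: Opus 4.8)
The plan is to derive the Pak--Postnikov theorem as the $n\to\infty$ limit of Theorem~\ref{thm: mc-lecture hall} with $z=1$, where $n$ is the length parameter of that theorem; to avoid a clash I write $N$ for the integer being partitioned. The only genuinely new ingredients are the identification of the two limiting families and a coefficient-stabilization argument, so I would organize the proof around those.

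First I would compare $q^N$-coefficients of the two sums in \eqref{id: mc-lechall} at $z=1$. Choosing any $n$ with $nm>N$, every partition of $N$ into parts $\equiv c\pmod m$ has largest part at most $N<nm$ and hence lies in $\OO_{c,m}^n$; thus $[q^N]\sum_{\lambda\in\OO_{c,m}^n}q^{|\lambda|}$ is already the number of partitions of $N$ into parts congruent to $c$ modulo $m$. On the other side $\LL_{c,m}^n\subseteq\D_{c,m}$ holds for every $n$, so it suffices to check that for all sufficiently large $n$ every weight-$N$ member of $\D_{c,m}$ belongs to $\LL_{c,m}^n$; then $[q^N]\sum_{\mu\in\LL_{c,m}^n}q^{|\mu|}$ equals $\#\{\mu\in\D_{c,m}:|\mu|=N\}$, and the equality of the two sums in \eqref{id: mc-lechall} forces the two counts to agree.

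The combinatorial heart is to show that $\D_{c,m}$ is exactly the Pak--Postnikov family. Given $\mu$ with distinct values $v_1>\cdots>v_t>0$ of multiplicities $\mu_1,\ldots,\mu_t$ and partial sums $M_j=\mu_1+\cdots+\mu_j$ (so $M_1=\mu_1$), I would apply the conjugation lemma, $\sbf(\mu)=\lbf(\mu')$. The defining condition $\sbf(\mu)=(0,\ldots,0,a,0,\ldots,0)$ with $a$ in position $c$ says precisely that every part of $\mu'$ is $\equiv 0$ or $\equiv c\pmod m$. Since the distinct parts of $\mu'$ are exactly $M_1<\cdots<M_t$ with consecutive gaps $M_{j}-M_{j-1}=\mu_j\in\{1,\ldots,m-1\}$, their residues are forced to alternate: $M_1\equiv c$ gives $\mu_1=c$, then $M_2\equiv 0$ gives $\mu_2=m-c$, and inductively $\mu_j=c$ for odd $j$ and $\mu_j=m-c$ for even $j$. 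Hence $\mu$ has $c$ parts of the largest size, $m-c$ of the next, $c$ of the next, and so on, which is exactly the family Pak and Postnikov describe; the converse inclusion is immediate from the same computation.

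I expect the step needing the most care to be the claim that a fixed $\mu\in\D_{c,m}$ eventually lies in $\LL_{c,m}^n$. The point is that, with the block structure just found, the three parts entering the $k$-th ratio condition are $\mu_{km+c}=v_{2k+1}$, $\mu_{km+m}=v_{2k+2}$, and $\mu_{(k+1)m+c}=v_{2k+3}$, i.e.\ values from three \emph{different} blocks, hence strictly decreasing whenever nonzero. This strict decrease is exactly what makes $\frac{v_{2k+1}}{n-2k}\ge\frac{v_{2k+2}}{n-2k-1}\ge\frac{v_{2k+3}}{n-2k-2}$ hold once $n$ is large, while for indices exceeding $l(\mu)$ the numerators vanish and the stated conventions on $\tfrac{p}{q}$ render the inequalities trivial. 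Together with the bound $l(\mu)\le\lfloor\frac{n+1}{2}\rfloor(m-2)+n$, which clearly holds for all large $n$, this justifies $\lim_{n\to\infty}\LL_{c,m}^n=\D_{c,m}$ and completes the argument.
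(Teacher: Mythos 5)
Your proposal is correct and takes essentially the same approach as the paper, which presents this theorem precisely as the $n\to\infty$ limiting case of Theorem~\ref{thm: mc-lecture hall} (via $\lim_{n\to\infty}\OO_{c,m}^n\to\OO_{c,m}$ and $\lim_{n\to\infty}\LL_{c,m}^n\to\D_{c,m}$), i.e., exactly the coefficient-stabilization argument you spell out. Your conjugation argument identifying $\D_{c,m}$ with the Pak--Postnikov family (multiplicities forced to alternate $c,\,m-c,\,c,\ldots$ because the partial sums of multiplicities must lie in residue classes $0$ or $c$ modulo $m$) is correct and makes explicit a step the paper leaves implicit.
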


\begin{proof}[Proof of Theorem~\ref{thm: mc-lecture hall}]

Since a partition $\lambda\in\OO_{c,m}^n$ has all its parts congruent to $c$ modulo $m$, with the largest part $\lambda_1<nm$, we see that $\ell(\lambda)=\ell_c(\lambda)=l(\lambda)$ and we have
\begin{align*}
\sum_{\lambda\in\OO_{c,m}^{n}}z^{\ell(\lambda)}q^{|\lambda|}=\frac{1}{(1-zq^c)(1-zq^{m+c})\cdots(1-zq^{(n-1)m+c})}.
\end{align*}

It remains to prove the first equality. We achieve this by constructing a bijection $\varphi_n$ from $\OO_{c,m}^n$ to $\LL_{c,m}^n$ that is weight-preserving and sends $\ell(\lambda)$ to $s(\varphi_n(\lambda))$. This bijection is based on the third author's original idea from \cite{Yee}, which was later generalized to deal with the $\ell$-sequence version in \cite{SY}. We modify it to suit the current settings.

Define
$\varphi_n: \OO_{c,m}^n \longrightarrow \LL_{c,m}^n$ as follows.
For $\lambda\in \OO_{c,m}^n$, 
let $\mu$ be the sequence obtained from the empty sequence $(0,0,\ldots)$ by recursively inserting the parts of $\lambda$ in nonincreasing order according to the following insertion procedure. 
We define $\varphi_n(\lambda)=\mu$.

\begin{framed}
\begin{center}
{\bf Insertion procedure}
\end{center}

\noindent Let $(\mu_1, \mu_2,\ldots) \in \LL_{c,m}^n$.  To insert $km+c$ into $(\mu_1,\mu_2,\ldots)$, set $j=0$. \hfill



If $j<k$ and $\dfrac{\mu_{mj+c}+1}{n-2j}\ge \dfrac{\mu_{mj+m}+1}{n-2j-1}$, \hfill (Test I)

\hskip 0.2in then increase $j$ by $1$ and go to (Test I); \hfill

otherwise stop testing and return
$$(\mu_1,\mu_2,\ldots)+(\overbrace{1,\ldots,1}^{jm},\overbrace{k-j+1,\ldots,k-j+1}^{c},\overbrace{k-j,\ldots,k-j}^{m-c},0,0,\ldots).$$

\end{framed}

The effect of this insertion is that we use up a complete part $km+c$, so the weight of the sequence $(\mu_1,\mu_2,\ldots)$ and its $m$-alternating sum are increased by $km+c$ and $1$, respectively. Also, it can be checked easily that the returned sequence satisfies the condition for $\LL_{c,m}^n$. We omit the details.

The map $\varphi_n$ is indeed invertible since the parts of $\lambda$ were inserted in nonincreasing order, i.e., from the largest to smallest.  If the parts are not inserted in this order, $\varphi_n$ is not necessarily invertible.  The inverse of $\varphi_n$, namely $\psi_n$, can be described similarly in this algorithmic fashion. For a given partition $\mu\in\LL_{c,m}^n$ with $s(\mu)=k$, define $\psi_n(\mu)$ to be the sequence $\lambda=(\lambda_1,\ldots,\lambda_k,0,0,\ldots)$ obtained from the empty sequence $(0,0,\ldots)$ by adding nondecreasing parts one at a time that are derived from peeling off partially or entirely certain parts of $\mu$ according to the following deletion procedure.

\begin{framed}
\begin{center}
{\bf Deletion procedure}
\end{center}
\noindent Let $(\mu_1,\mu_2,\ldots)\neq (0,0,\ldots)$ be in $\LL_{c,m}^n$. Set $k=0$ and $j=0$. \hfill




If $(\mu_1,\mu_2,\ldots)-(\overbrace{1,\ldots,1}^{jm},\overbrace{k-j+1,\ldots,k-j+1}^{c},\overbrace{k-j,\ldots,k-j}^{m-c},0,0,\ldots) \in\LL_{c,m}^n$,
 (Test D)

\hskip 0.2in  then stop testing and return $km+c$ with \hfill

\qquad \qquad   $\mu=(\mu_1,\mu_2,\ldots)-(\overbrace{1,\ldots,1}^{jm},\overbrace{k-j+1,\ldots, k-j+1}^{c},\overbrace{k-j,\ldots, k-j}^{m-c},0,0,\ldots)$; \hfill

otherwise,  if  $j<k$, then increase $j$ by $1$ and  go to (Test D); \hfill

 \qquad \qquad  otherwise increase $k$ by $1$, set $j=0$ and go to (Test D).\hfill
\end{framed}

The effect of this deletion is that the weight of the sequence $(\mu_1,\mu_2,\ldots)$ and its $m$-alternating sum are decreased by $km+c$ and $1$, respectively. Also, it should be noted that this deletion process must stop after a finite number of steps. Since $(\mu_1, \mu_2,\ldots )\neq (0,0,\ldots)$ belongs to $\LL_{c,m}^n$, there must be  $i$ such that
\begin{equation*}
\mu_{i m+c}>\mu_{i m+m}. 
\end{equation*}
Let $j$ be the largest such $i$. Then, $\mu_{l}=0$ for any $l>jm+m$ and
\begin{equation*}
(\mu_1, \mu_2, \ldots)- (\overbrace{1,\ldots,1}^{jm},\overbrace{k-j+1,\ldots, k-j+1}^{c},\overbrace{k-j,\ldots, k-j}^{m-c},0,0,\ldots) \in \LL_{c,m}^n,
\end{equation*}
which shows such $j$ must pass (Test D).

To finish the proof, we make the following claims about $\varphi_n$ and $\psi_n$ without giving the proofs, since all of them are essentially the same as those found in \cite{Yee}, which is the case when $m=2$ and $c=1$.
\begin{itemize}
	\item Each insertion outputs a new $\mu\in\LL_{c,m}^n$, and in particular, $\varphi_n$ is well-defined.
	\item Each deletion outputs a new $\lambda\in\OO_{c,m}^n$, and in particular, $\psi_n$ is well-defined.
	\item The deletion procedure reverses the insertion procedure, consequently $\psi_n$ is the inverse of $\varphi_n$. 
\end{itemize}
\end{proof}

Before we move on, we provide an example for the insertion procedure.
\begin{example}
Let $m=3$, $c=2$,  $n=5$, and $\mu=(3,3,2,0,0,\ldots) \in \LL_{2,3}^5$. We insert $8$ into $\mu$ as follows. Note that
\begin{align*}
\frac{\mu_2+1}{5}=\frac{4}{5}\ge \frac{\mu_{3}+1}{4}=\frac{3}{4} \quad \text{but } \quad \frac{\mu_5+1}{3}=\frac{1}{3} \not\ge \frac{\mu_{6}+1}{2}=\frac{1}{2}.
\end{align*}
So we get
\begin{equation*}
(3,3,2,0,0,\ldots)+ (1,1,1,2,2,1)=(4,4,3,2,2,1).
\end{equation*}

\end{example}

In Figure~\ref{insertion}, we illustrate the process of getting $\mu$ by applying $\varphi_5$ to $\lambda=(11, 11, 8, 8, 8, 5, 5) \in \OO_{2,3}^{5}$ using $3$-modular Ferrers' graphs. Newly inserted cells after each step are colored red.

\begin{figure}[htp]
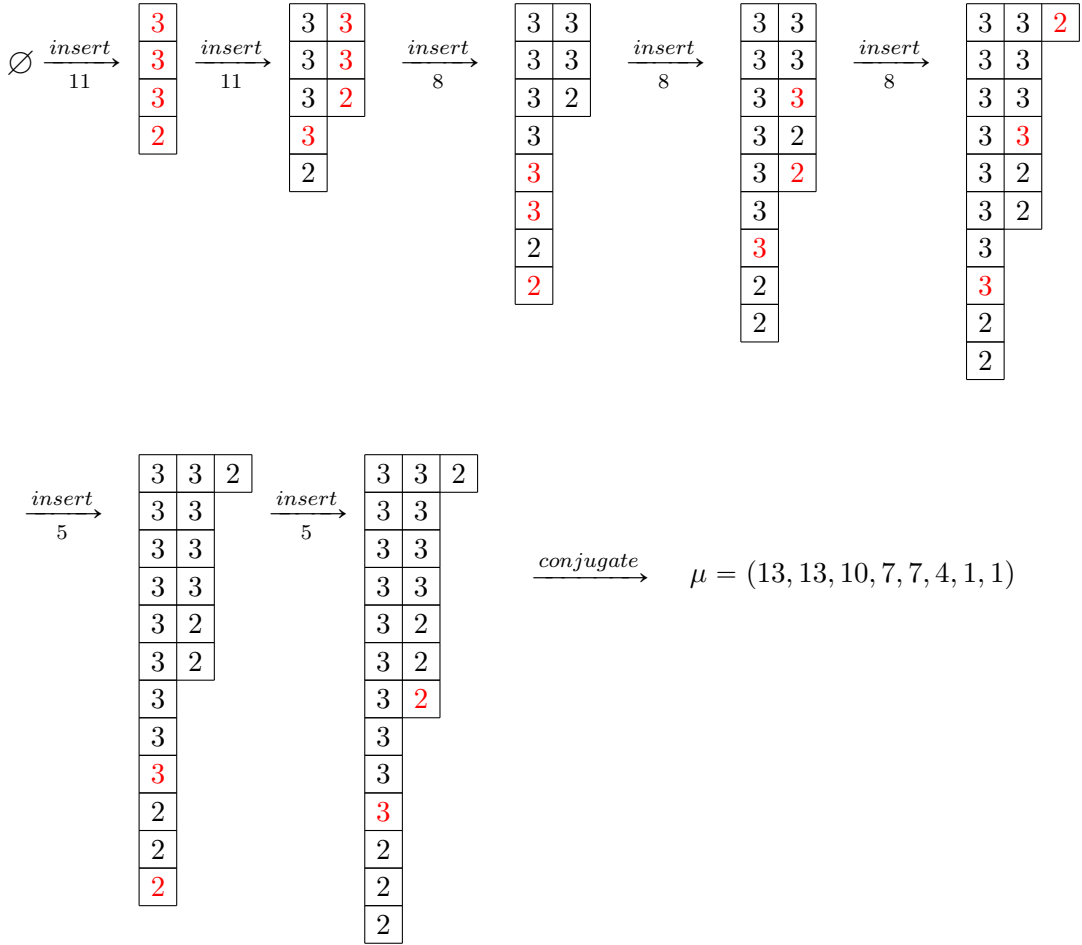

\begin{ferrers}
\addtext{-1.0}{-0.8}{$\emptyset\xrightarrow[11]{insert}$}
\addcellrows{1+1+1+1}
\highlightcellbyletter{1}{1}{\redd{3}}
\highlightcellbyletter{2}{1}{$\redd{3}$}
\highlightcellbyletter{3}{1}{$\redd{3}$}
\highlightcellbyletter{4}{1}{$\redd{2}$}

\addtext{1.25}{-.8}{$\xrightarrow[11]{insert}$}
\putright
\addcellrows{2+2+2+1+1}
\highlightcellbyletter{1}{1}{3}
\highlightcellbyletter{2}{1}{3}
\highlightcellbyletter{3}{1}{$3$}
\highlightcellbyletter{5}{1}{$2$}
\highlightcellbyletter{4}{1}{$\redd{3}$}
\highlightcellbyletter{1}{2}{$\redd{3}$}
\highlightcellbyletter{2}{2}{$\redd{3}$}
\highlightcellbyletter{3}{2}{$\redd{2}$}

\addtext{2}{-.8}{$\xrightarrow[8]{insert}$}
\putright
\addcellrows{2+2+2+1+1+1+1+1}
\highlightcellbyletter{1}{1}{3}
\highlightcellbyletter{2}{1}{3}
\highlightcellbyletter{3}{1}{$3$}
\highlightcellbyletter{4}{1}{$3$}
\highlightcellbyletter{7}{1}{$2$}
\highlightcellbyletter{1}{2}{$3$}
\highlightcellbyletter{2}{2}{$3$}
\highlightcellbyletter{3}{2}{$2$}
\highlightcellbyletter{5}{1}{$\redd{3}$}
\highlightcellbyletter{6}{1}{$\redd{3}$}
\highlightcellbyletter{8}{1}{$\redd{2}$}

\addtext{2}{-.8}{$\xrightarrow[8]{insert}$}
\putright
\addcellrows{2+2+2+2+2+1+1+1+1}
\highlightcellbyletter{1}{1}{3}
\highlightcellbyletter{2}{1}{3}
\highlightcellbyletter{3}{1}{$3$}
\highlightcellbyletter{4}{1}{$3$}
\highlightcellbyletter{5}{1}{$3$}
\highlightcellbyletter{1}{2}{$3$}
\highlightcellbyletter{2}{2}{$3$}
\highlightcellbyletter{4}{2}{$2$}
\highlightcellbyletter{6}{1}{${3}$}
\highlightcellbyletter{8}{1}{${2}$}
\highlightcellbyletter{9}{1}{${2}$}
\highlightcellbyletter{7}{1}{$\redd{3}$}
\highlightcellbyletter{3}{2}{$\redd{3}$}
\highlightcellbyletter{5}{2}{$\redd{2}$}

\addtext{2}{-.8}{$\xrightarrow[8]{insert}$}

\putright

\addcellrows{3+2+2+2+2+2+1+1+1+1}
\highlightcellbyletter{1}{1}{3}
\highlightcellbyletter{2}{1}{3}
\highlightcellbyletter{3}{1}{$3$}
\highlightcellbyletter{4}{1}{$3$}
\highlightcellbyletter{5}{1}{$3$}
\highlightcellbyletter{1}{2}{$3$}
\highlightcellbyletter{2}{2}{$3$}
\highlightcellbyletter{3}{2}{$3$}
\highlightcellbyletter{6}{1}{${3}$}
\highlightcellbyletter{7}{1}{${3}$}
\highlightcellbyletter{10}{1}{${2}$}
\highlightcellbyletter{9}{1}{${2}$}
\highlightcellbyletter{4}{2}{$\redd{3}$}
\highlightcellbyletter{5}{2}{${2}$}
\highlightcellbyletter{8}{1}{$\redd{3}$}
\highlightcellbyletter{6}{2}{$2$}
\highlightcellbyletter{1}{3}{$\redd{2}$}

\putbelow
\addtext{-1.0}{-.8}{$\xrightarrow[5]{insert}$}

\addcellrows{3+2+2+2+2+2+1+1+1+1+1+1}
\highlightcellbyletter{1}{1}{3}
\highlightcellbyletter{2}{1}{3}
\highlightcellbyletter{3}{1}{$3$}
\highlightcellbyletter{4}{1}{$3$}
\highlightcellbyletter{5}{1}{$3$}
\highlightcellbyletter{1}{2}{$3$}
\highlightcellbyletter{2}{2}{$3$}
\highlightcellbyletter{3}{2}{$3$}
\highlightcellbyletter{6}{1}{${3}$}
\highlightcellbyletter{7}{1}{${3}$}
\highlightcellbyletter{8}{1}{${3}$}
\highlightcellbyletter{10}{1}{${2}$}
\highlightcellbyletter{4}{2}{${3}$}
\highlightcellbyletter{5}{2}{${2}$}
\highlightcellbyletter{11}{1}{${2}$}
\highlightcellbyletter{6}{2}{${2}$}
\highlightcellbyletter{1}{3}{${2}$}
\highlightcellbyletter{9}{1}{$\redd{3}$}
\highlightcellbyletter{12}{1}{$\redd{2}$}

\addtext{2.25}{-.8}{$\xrightarrow[5]{insert}$}
\putright
\addcellrows{3+2+2+2+2+2+2+1+1+1+1+1+1}
\highlightcellbyletter{1}{1}{3}
\highlightcellbyletter{2}{1}{3}
\highlightcellbyletter{3}{1}{$3$}
\highlightcellbyletter{4}{1}{$3$}
\highlightcellbyletter{5}{1}{$3$}
\highlightcellbyletter{1}{2}{$3$}
\highlightcellbyletter{2}{2}{$3$}
\highlightcellbyletter{3}{2}{$3$}
\highlightcellbyletter{6}{1}{${3}$}
\highlightcellbyletter{7}{1}{${3}$}
\highlightcellbyletter{8}{1}{${3}$}
\highlightcellbyletter{9}{1}{${3}$}
\highlightcellbyletter{4}{2}{${3}$}
\highlightcellbyletter{5}{2}{${2}$}
\highlightcellbyletter{13}{1}{${2}$}
\highlightcellbyletter{6}{2}{${2}$}
\highlightcellbyletter{1}{3}{${2}$}
\highlightcellbyletter{11}{1}{${2}$}
\highlightcellbyletter{12}{1}{${2}$}
\highlightcellbyletter{10}{1}{$\redd{3}$}
\highlightcellbyletter{7}{2}{$\redd{2}$}

\putright
\addtext{0}{-1.5}{$\xrightarrow{conjugate}$}
\addtext{3.5}{-1.6}{$\mu=(13,13,10,7,7, 4, 1, 1)$}
\end{ferrers}
\caption{$\varphi_5((11,11,8,8,8, 5,5))=(13,13,10,7, 7, 4,1,1)$}
\label{insertion}
\end{figure}

As we will see, the bijection $\varphi_n$ plays a crucial role in our proof of Theorem~\ref{thm:mfallingLH}. We need a few more definitions.

\begin{Def}
For a partition $\lambda\in\D_m$, we define two local statistics ``\textbf{f}irst \textbf{b}igger'' and ``\textbf{l}ast \textbf{b}igger''. For each $i,\:0\le i\le\left\lfloor\dfrac{l(\lambda)-1}{m}\right\rfloor$, where $l(\lambda)$ is the number of nonzero parts in $\lambda$, suppose
$$
\lambda_{im+1}=\cdots=\lambda_{im+j}>\lambda_{im+j+1}\ge\cdots\ge\lambda_{im+k}>\lambda_{im+k+1}=\cdots=\lambda_{im+m}.
$$
Then we let $\fb_i=j,\:\lb_i=k$.
\end{Def}

Note that since $\lambda\in\D_m$, such $j$ and $k$ must exist and $j\le k$, so $\fb_i$ and $\lb_i$ are well-defined.

\begin{Def}\label{m-falling LH}
Fix a positive integer $n$. For a partition $\lambda\in\D_m$, we call it an \emph{$m$-falling lecture hall partition of order $n$}, if $l(\lambda)\le \left\lfloor\dfrac{n+1}{2}\right\rfloor(m-2)+n$, and the following two conditions hold.
\begin{enumerate}
	\item For $i,\:0\le i<\left\lfloor\dfrac{l(\lambda)-1}{m}\right\rfloor$, $\lb_i\le \fb_{i+1}$.
	\item \begin{align*}
	\dfrac{\lambda_1}{n}\ge\dfrac{\lambda_m}{n-1}\ge\dfrac{\lambda_{m+1}}{n-2}\ge\dfrac{\lambda_{2m}}{n-3}\ge\cdots\ge\dfrac{\lambda_{(k-1)m+1}}{2}\ge\dfrac{\lambda_{km}}{1}, \text{ if $n=2k$,}\\
	\dfrac{\lambda_1}{n}\ge\dfrac{\lambda_m}{n-1}\ge\dfrac{\lambda_{m+1}}{n-2}\ge\dfrac{\lambda_{2m}}{n-3}\ge\cdots\ge\dfrac{\lambda_{km+1}}{1}\ge\dfrac{\lambda_{km+m}}{0}, \text{ if $n=2k+1$.}
	\end{align*}
\end{enumerate}
We denote the set of all $m$-falling lecture hall partitions of order $n$ as $\LL_{m\searrow}^n$.
\end{Def}
\begin{remark}
Partitions in $\D_m$ satisfying condition (1) above are said to be of $m$-alternating type in \cite{Kei-the}.
\end{remark}

Recall  the definition of $m$-falling regular partitions. A partition is $m$-falling regular if the parts are not multiples of $m$ and their positive residues are nonincreasing.

For a chosen vector $\vbf=(v_1,v_2,\ldots,v_{m-1})$, let
\begin{align*}
\OO_{m\searrow}^{\vbf}&:=\{\lambda\in\OO_{m\searrow}^n:\lbf(\lambda)=\vbf\},\\
\LL_{m\searrow}^{\vbf}&:=\{\mu\in\LL_{m\searrow}^{n}:\sbf(\mu)=\vbf\}.
\end{align*}


\subsection{Proof of Theorem~\ref{thm:mfallingLH}}

For some $c, 1\le c \le m-1$ and a vector $\vbf=(v_1,v_2,\ldots,v_{m-1})$, we consider two embeddings,
$$f_{\vbf}:\OO_{m\searrow}^{\vbf}\hookrightarrow\OO_{c,m}^n \text{ and } g_{\vbf}:\LL_{m\searrow}^{\vbf}\hookrightarrow\LL_{c,m}^n,$$
such that $\ell(\lambda)=\ell(f_{\vbf}(\lambda))$ and $s(\mu)=s(g_{\vbf}(\mu))$. To be precise, for a given partition $\lambda$, both $f_{\vbf}$ and $g_{\vbf}$ change the residue of each part of $\lambda$ mod $m$ to be uniformly the predetermined value $c$. In terms of the corresponding $m$-modular Ferrers' graph, the two maps keep all the cells labelled $m$, but relabel all the remaining cells as $c$. So in general, neither of these two maps preserves the weight of the partition, but they do keep the number of cells in their $m$-modular Ferrers' graphs the same.

Moreover, the given vector $\vbf$ and the $m$-falling condition uniquely determine the preimage of any partition in $f_{\vbf}(\OO_{m\searrow}^{\vbf})$. Similarly, the condition (1) in Definition~\ref{m-falling LH} together with $\vbf$ dictate the preimage of any partition in $g_{\vbf}(\LL_{m\searrow}^{\vbf})$. This entitles us to define a bijection $$\phi_n=g_{\vbf}^{-1}\circ\varphi_n\circ f_{\vbf}: \OO_{m\searrow}^n\longrightarrow \LL_{m\searrow}^n,$$
where $\vbf$ is the $m$-length type of the partition it acts on.

It has been proved in Theorem~\ref{thm: mc-lecture hall} that $\varphi_n$ is a bijection satisfying $\ell(\lambda)=s(\varphi_n(\lambda))$, and the discussion above shows that both $f_{\vbf}$ and $g_{\vbf}$ are invertible. Consequently, we see that $\phi_n$ is indeed a bijection such that $\lbf(\lambda)=\sbf(\phi_n(\lambda))$ for any $\lambda\in\OO_{m\searrow}^n$, and we complete the proof. \hfill \qed

\begin{example}
As an illustrative example, we take $m=3,n=3$ and fix the vector $\vbf=(3,2)$. In Table~\ref{tab: phi_3}, we list out all $21$ partitions $\lambda$ in $\OO_{3\searrow}^{(3,2)}$ with $\lambda_1<nm=9$, as well as all $21$ partitions $\mu$ in $\LL_{3\searrow}^{(3,2)}$ with $l(\mu)\le 5$. They are matched up via our map $\phi_3$. The derivation of one particular partition $(8,5,5,2,2)$ from $(5,5,4,4,4)$ using $3$-modular Ferrers' graphs is detailed in Figure~\ref{phi map}.
\end{example}

\begin{figure}[htp]
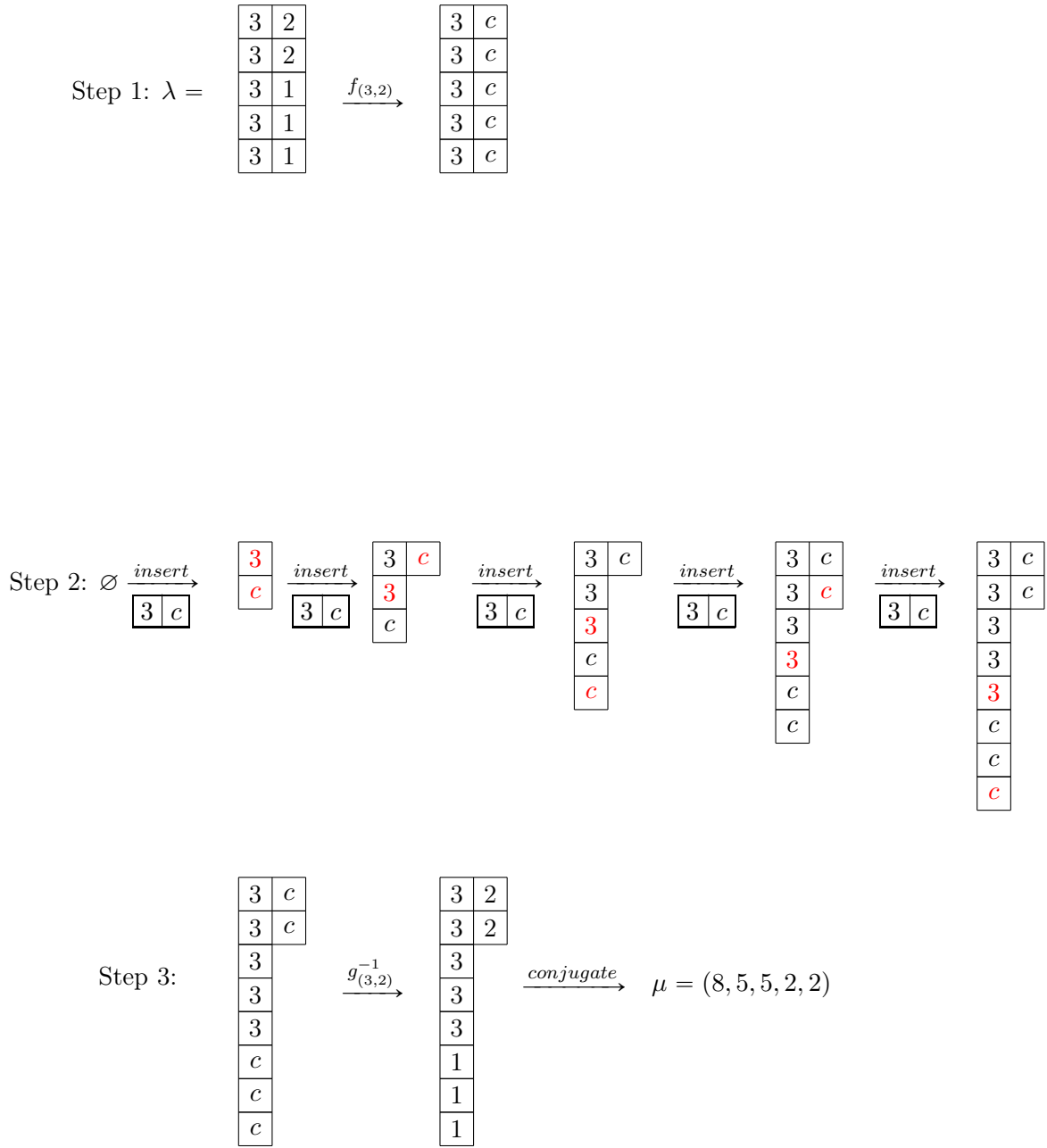

\begin{ferrers}
\addtext{-1.5}{-1.3}{Step 1: $\lambda=$}
\addcellrows{2+2+2+2+2}
\highlightcellbyletter{1}{1}{3}
\highlightcellbyletter{2}{1}{3}
\highlightcellbyletter{3}{1}{3}
\highlightcellbyletter{4}{1}{3}
\highlightcellbyletter{5}{1}{3}
\highlightcellbyletter{1}{2}{2}
\highlightcellbyletter{2}{2}{2}
\highlightcellbyletter{3}{2}{1}
\highlightcellbyletter{4}{2}{1}
\highlightcellbyletter{5}{2}{1}
\addtext{2}{-1.3}{$\xrightarrow{f_{(3,2)}}$}
\putright
\addcellrows{2+2+2+2+2}
\highlightcellbyletter{1}{1}{3}
\highlightcellbyletter{2}{1}{3}
\highlightcellbyletter{3}{1}{3}
\highlightcellbyletter{4}{1}{3}
\highlightcellbyletter{5}{1}{3}
\highlightcellbyletter{1}{2}{$c$}
\highlightcellbyletter{2}{2}{$c$}
\highlightcellbyletter{3}{2}{$c$}
\highlightcellbyletter{4}{2}{$c$}
\highlightcellbyletter{5}{2}{$c$}
\putbelow
\addtext{-2}{-0.8}{Step 2: $\varnothing\xrightarrow[\boxed{3\ \ c}]{insert}$}
\addcellrows{1+1}
\highlightcellbyletter{1}{1}{$\redd{3}$}
\highlightcellbyletter{2}{1}{$\redd{c}$}
\addtext{1.25}{-.8}{$\xrightarrow[\boxed{3\ \ c}]{insert}$}
\putright
\addcellrows{2+1+1}
\highlightcellbyletter{1}{1}{3}
\highlightcellbyletter{2}{1}{$\redd{3}$}
\highlightcellbyletter{3}{1}{$c$}
\highlightcellbyletter{1}{2}{$\redd{c}$}
\addtext{2}{-.8}{$\xrightarrow[\boxed{3\ \ c}]{insert}$}
\putright
\addcellrows{2+1+1+1+1}
\highlightcellbyletter{1}{1}{3}
\highlightcellbyletter{2}{1}{3}
\highlightcellbyletter{3}{1}{$\redd{3}$}
\highlightcellbyletter{4}{1}{$c$}
\highlightcellbyletter{5}{1}{$\redd{c}$}
\highlightcellbyletter{1}{2}{$c$}
\addtext{2}{-.8}{$\xrightarrow[\boxed{3\ \ c}]{insert}$}
\putright
\addcellrows{2+2+1+1+1+1}
\highlightcellbyletter{1}{1}{3}
\highlightcellbyletter{2}{1}{3}
\highlightcellbyletter{3}{1}{3}
\highlightcellbyletter{4}{1}{$\redd{3}$}
\highlightcellbyletter{5}{1}{$c$}
\highlightcellbyletter{6}{1}{$c$}
\highlightcellbyletter{1}{2}{$c$}
\highlightcellbyletter{2}{2}{$\redd{c}$}
\addtext{2}{-.8}{$\xrightarrow[\boxed{3\ \ c}]{insert}$}
\putright
\addcellrows{2+2+1+1+1+1+1+1}
\highlightcellbyletter{1}{1}{3}
\highlightcellbyletter{2}{1}{3}
\highlightcellbyletter{3}{1}{3}
\highlightcellbyletter{4}{1}{3}
\highlightcellbyletter{5}{1}{$\redd{3}$}
\highlightcellbyletter{6}{1}{$c$}
\highlightcellbyletter{7}{1}{$c$}
\highlightcellbyletter{8}{1}{$\redd{c}$}
\highlightcellbyletter{1}{2}{$c$}
\highlightcellbyletter{2}{2}{$c$}

\putbelow
\addtext{-1.5}{-1.5}{Step 3:}
\addcellrows{2+2+1+1+1+1+1+1}
\highlightcellbyletter{1}{1}{3}
\highlightcellbyletter{2}{1}{3}
\highlightcellbyletter{3}{1}{3}
\highlightcellbyletter{4}{1}{3}
\highlightcellbyletter{5}{1}{3}
\highlightcellbyletter{6}{1}{$c$}
\highlightcellbyletter{7}{1}{$c$}
\highlightcellbyletter{8}{1}{$c$}
\highlightcellbyletter{1}{2}{$c$}
\highlightcellbyletter{2}{2}{$c$}
\addtext{2}{-1.5}{$\xrightarrow{g_{(3,2)}^{-1}}$}
\putright
\addcellrows{2+2+1+1+1+1+1+1}
\highlightcellbyletter{1}{1}{3}
\highlightcellbyletter{2}{1}{3}
\highlightcellbyletter{3}{1}{3}
\highlightcellbyletter{4}{1}{3}
\highlightcellbyletter{5}{1}{3}
\highlightcellbyletter{6}{1}{1}
\highlightcellbyletter{7}{1}{1}
\highlightcellbyletter{8}{1}{1}
\highlightcellbyletter{1}{2}{2}
\highlightcellbyletter{2}{2}{2}
\addtext{2}{-1.5}{$\xrightarrow{conjugate}$}
\addtext{4.5}{-1.6}{$\mu=(8,5,5,2,2)$}
\setbase{0}{0}
\addline{-1.14}{-8.78}{-1.14}{-9.25}
\addline{1.25}{-8.78}{1.25}{-9.25}
\addline{4}{-8.78}{4}{-9.25}
\addline{7}{-8.78}{7}{-9.25}
\addline{10}{-8.78}{10}{-9.25}
\end{ferrers}
\caption{$\phi_3((5,5,4,4,4))=(8,5,5,2,2)$}
\label{phi map}
\end{figure}

\begin{table}[tbp]\caption{$\phi_3:\OO_{3\searrow}^{(3,2)}\longrightarrow\LL_{3\searrow}^{(3,2)}$}\label{tab: phi_3}
\centering
\begin{tabular}{c||l}
\hline
 $\OO_{3\searrow}^{(3,2)}$ & \quad $\LL_{3\searrow}^{(3,2)}$\\
\hline
$8,8,7,7,7$ & $15,12,10$ \\
$8,8,7,7,4$ & $14,11,9$ \\
$8,8,7,7,1$ & $13,10,8$ \\
$8,8,7,4,4$ & $12,9,8,1,1$ \\
$8,8,7,4,1$ & $12,9,7$ \\
$8,8,7,1,1$ & $11,8,6$ \\
$8,8,4,4,4$ & $11,8,7,1,1$ \\
$8,8,4,4,1$ & $10,7,6,1,1$ \\
$8,8,4,1,1$ & $10,7,5$ \\
$8,8,1,1,1$ & $9,6,4$ \\
$8,5,4,4,4$ & $9,6,6,2,2$ \\
$8,5,4,4,1$ & $9,6,5,1,1$ \\
$8,5,4,1,1$ & $8,5,4,1,1$ \\
$8,5,1,1,1$ & $8,5,3$ \\
$8,2,1,1,1$ & $7,4,2$ \\
$5,5,4,4,4$ & $8,5,5,2,2$ \\
$5,5,4,4,1$ & $7,4,4,2,2$ \\
$5,5,4,1,1$ & $7,4,3,1,1$ \\
$5,5,1,1,1$ & $6,3,2,1,1$ \\
$5,2,1,1,1$ & $6,3,1$ \\
$2,2,1,1,1$ & $5,2$ \\
\hline
\end{tabular}
\end{table}

\section{Final remark}\label{sec4: outlook}
Recall the $q$-rising factorial $(q;q)_a:=(1-q)(1-q^2)\cdots (1-q^a)$ for $a\ge 1$ with $(q;q)_0=1$ and
 the Gaussian polynomial
\begin{equation*}
\qbin{a+b}{b}_q:= \frac{(q;q)_{a+b}}{(q;q)_{a}(q;q)_{b}}
\end{equation*}
for $a,b\ge 0$.

We also recall the $i$-th homogeneous symmetric polynomial  in $k$ variables $h_i (x_1,\ldots, x_{k})$:
\begin{equation*}
h_{i}(x_1,\ldots, x_{k})=\sum_{1\le j_1\le j_2\le \cdots \le j_i \le k} x_{j_1} x_{j_2} \cdots x_{j_i}.
\end{equation*}
Then
\begin{equation*}
\sum_{\mu\in\OO_{m\searrow}^n} z_1^{\ell_1(\mu)}\cdots z_{m-1}^{\ell_{m-1}(\mu)}q^{|\mu|}=\sum_{i=0}^{\infty} {h_i(z_1q,z_2\, q^2,\ldots, z_{m-1}q^{m-1}) }\qbin{n-1+i}{i}_{q^m}.
\end{equation*}

Setting $z_1=\cdots =z_{m-1}=z$, we get
\begin{align}\label{gf for OO_m^n}
 \sum_{\mu\in\OO_{m\searrow}^n} z^{\ell(\mu)} q^{|\mu|} &=\sum_{i=0}^{\infty}h_i (q, q^2,\ldots, q^{m-1}) \qbin{n-1+i}{i}_{q^m}\, z^i\nonumber \\
 &=\sum_{i=0}^{\infty} \left[\begin{matrix} m-2+ i \\ i  \end{matrix}\right]_q \qbin{n-1+i}{i}_{q^m}\, z^iq^i.
\end{align}

When $n \to \infty$,
\begin{align*}
\lim_{n \to \infty} \sum_{\mu\in\OO_{m\searrow}^n} z_1^{\ell_1(\mu)}\cdots z_{m-1}^{\ell_{m-1}(\mu)}q^{|\mu|}&=\sum_{i=0}^{\infty} \frac{{h_i (z_1,z_2q,\ldots, z_{m-1} q^{m-2} )} q^i}{(q^m;q^m)_i}
\end{align*}
and
\begin{align*}
\lim_{n \to \infty} \sum_{\mu\in\OO_{m\searrow}^n} z^{\ell(\mu)} q^{|\mu|}&=\sum_{i=0}^{\infty}  \qbin{m-2+i}{i} \frac{z^i q^i}{(q^m;q^m)_i},
\end{align*}
which is the generating function for $m$-falling regular partitions.

In the first of his series papers on reviving MacMahon's partition analysis, Andrews \cite{And3} gave a symbolic computational proof of \eqref{id:lecture-hall}. Is a similar approach likely to produce the generating function for the partitions enumerated by $\LL_{m\searrow}^n$, that is equivalent to \eqref{gf for OO_m^n}? If so, this may lead to an analytic proof of Theorem~\ref{thm:mfallingLH}.

\section*{Acknowledgement}
This work was initiated during the first author's visit to the Pennsylvania State University in the summer of 2018, he would like to thank the third author and the Department of Mathematics for the hospitality extended during his stay.

The first and second authors were supported by the Fundamental Research Funds for the Central Universities (No.~2018CDXYST0024) and the National Natural Science Foundation of China (No.~11501061). The third author was partially supported by a grant ($\#280903$) from the Simons Foundation.

\end{document}